\newtheorem{example}{Example}[section]
\newcommand\rmd {\,{\rm d}}
\newcommand{\bel}{\begin{equation} \label}
\newcommand{\ee}{\end{equation}}
\def\beq{\begin{equation}}
\def\eeq{\end{equation}}
\newcommand{\bea}{\begin{eqnarray}}
\newcommand{\eea}{\end{eqnarray}}
\newcommand{\beas}{\begin{eqnarray*}}
\newcommand{\eeas}{\end{eqnarray*}}
\numberwithin{equation}{section}
\def\phi {\varphi}
\title{An Iterative Direct Sampling Method for Reconstructing Moving Inhomogeneities in Parabolic Problems\thanks{The work of B. Jin is supported by Hong Kong RGC General Research Fund (14306423 and 14306824) and ANR / Hong Kong RGC Joint Research Scheme (A-CUHK402/24) and a start-up fund from The Chinese University of Hong Kong. The work of J. Zou was substantially supported by the Hong Kong RGC General Research Fund (projects 14310324, 14306623 and 14306921) and NSFC / Hong Kong RGC Joint Research Scheme 2022/23 (project N\_CUHK465/22).}}
\author{Bangti Jin\thanks{Department of Mathematics, The Chinese University of Hong Kong, Shatin, N.T., Hong Kong (email: \texttt{b.jin@cuhk.edu.hk, fengruwang@cuhk.edu.hk, zou@math.cuhk.edu.hk})}\and
Fengru Wang\footnotemark[2] \and
Jun Zou\footnotemark[2]
}
\date{\today}
\begin{document}

\maketitle

\begin{abstract}
We propose in this work a novel iterative direct sampling method for imaging moving inhomogeneities in parabolic problems using boundary measurements.
It can efficiently identify the locations and shapes of moving inhomogeneities when very limited data are available, even with only one pair of lateral Cauchy data, and enjoys remarkable numerical stability for noisy data and over an extended time horizon.
The method is formulated in an abstract framework, and is applicable to linear and nonlinear parabolic problems, including linear, nonlinear, and mixed-type inhomogeneities.
Numerical experiments across diverse scenarios show its effectiveness and robustness against the data noise.
\end{abstract}

\begin{keywords}
iterative direct sampling method, parabolic inverse problems, limited boundary data, moving inclusion, nonlinear inhomogeneities
\end{keywords}


\section{Introduction}

The estimation of inhomogeneities and the identification of coefficients in parabolic partial differential equations (PDEs) arise in a broad spectrum of applications.
This task arises in time-resolved diffuse optical tomography for imaging biological tissues \cite{Zhang2001}, material defect characterization \cite{Krings2012,Nair2017}, and thermal imaging \cite{Ammari2017} etc.
Thus there is an urgent need to develop relevant mathematical theory and computational techniques. There have been breakthroughs in the theoretical aspect of parabolic inverse problems, and various uniqueness results have been established.
For example, the lateral Dirichlet-to-Neumann (DtN) map uniquely determines the time-dependent linear potential or thermal conductivity \cite{Rozhkov1997,Isakov2017}, and a nonlinear potential term $a(x,u)$ in a semilinear parabolic equation \cite{Isakov1993}. See the monographs \cite{Cannon:1984,Isakov2017} for in-depth discussions on the theory of parabolic inverse problems.

Numerically, the problem is ill-posed in the sense that it lacks good stability with respect to the data perturbation, and thus challenging to solve.
The task becomes particularly challenging when only limited lateral Cauchy data are available, for which the inverse problems are severely ill-posed.
This is characterized by the high sensitivity of the solutions with respect to the noise in the data, which greatly complicates the development of robust and efficient reconstruction methods.
Traditionally, this class of inverse problems can be solved using variational regularization \cite{EnglHankeNeubauer:1996,ItoJin:2014} or iterative regularization \cite{Kaltenbacher2008}.
These approaches are general-purposed, and despite their great success, still face significant computational challenges.
For example, variational regularization requires tedious parameter tuning, repeated solutions of the resulting optimization problem, and a good initial guess in order to guarantee the convergence of the optimizer to the desired solution.
The convexification method due to Klibanov \cite{Klibanov:1997} provides globally convergent approximations with explicit error estimates, thereby circumventing the issue of local minima in the conventional regularization framework \cite{BeilinaKlibanov:2008,WenlongZhang:2020}.
Nonetheless, it remains restricted to specific measurement configurations, and its application to limited lateral Cauchy data is still open.

Over the last three decades, direct methods have been developed to reduce the computational expense, including  MUSIC \cite{Gruber2004}, linear sampling method \cite{Kirsch1996}, factorization method \cite{Kirsch1998} and direct sampling method (DSM) \cite{ItoJinZou:2012} etc.
These methods construct suitable indicator functionals for localizing inhomogeneities, and are mostly designed for elliptic inverse problems, typically relying on matrix-vector operations or range characterizations of certain operators.
The DSM employs a carefully chosen Sobolev dual product in the measurement space and a set of probing functions satisfying near orthogonality with respect to the dual product, and constructs index functions via dual products between measurements and probing functions. Since its first development \cite{ItoJinZou:2012}, the DSM has been extended to various elliptic equations \cite{ItoJinZou:2013,Zou2014,Zou2015,Zou2021}.
Very recently, the work \cite{Ito2025Iterative} proposed an iterative DSM (IDSM) for general elliptic inverse problems \cite{Ito2025Iterative}, delivering improved accuracy and stability with modest computational overhead.

On the contrary, the development of direct methods for time-dependent inverse problems remains relatively limited.
The linear sampling method has been adapted to recover the conductivity in the heat equation \cite{Sun2024,Heck2012,Nakamura2013} assuming sufficiently large data (i.e., the full DtN data) is available, and for time-domain inverse scattering, see the works \cite{CakoniRezac:2017} and \cite{GengSong:2025}, which require time-domain multistatic data (based on multiple incident directions).
The work \cite{ikehata2009} developed
the enclosure method, using carefully constructed boundary fluxes and the Dirichlet data, to identify internal cavities, but presented no numerical experiments.
The DSM has been extended to parabolic equations \cite{ChowItoZou:2018sisc} and time-domain wave problems \cite{guo2016time,guo2024novel}.
The works \cite{guo2016time,guo2024novel} treat wave equations characterized by finite propagation speeds and focus on recovering static inhomogeneities.
Thus, the setting differs markedly from parabolic inverse problems involving moving inhomogeneities, which is the main focus of this work.

So far direct methods still face significant challenges for parabolic inverse problems, which involve the following issues.
First, existing works on elliptic and parabolic problems \cite{Zou2014,Zou2015,Zou2021,ChowItoZou:2018sisc} construct the DSM functional using the adjoint operator of the direct problem and a multiplication operator.
This strategy is effective for elliptic problems, but its accuracy is inadequate in the transient scenarios where errors accumulate over time.
Second, the DSM focuses on reconstructing the source induced by the inhomogeneities, rather than directly targeting the inhomogeneities.
This neglects the regularity structure of the source, cf. Section \ref{sec:PRI}.
Third, existing works rely on fractional-order Sobolev inner products to enhance the orthogonality of probing functions, which however may lead to numerical instability in the presence of data noise.
Finally, existing approaches are restricted to linear potential inhomogeneities, and
nonlinear or mixed-type inhomogeneities have not been addressed.
To overcome these limitations, we extend the IDSM \cite{Ito2025Iterative} in the elliptic case and propose a novel IDSM for parabolic inverse problems. This approach relies on the following four key innovations:
\begin{itemize}
\item \textbf{Dynamic adaptation}:
The IDSM integrates an iterative low-rank correction mechanism that is dynamically adapted to the scattering field of the problem.
It significantly improves the accuracy of the index function with little increase in the computational overhead.

\item \textbf{Direct inhomogeneity imaging}:
Unlike conventional DSMs that indirectly image inclusions through the source induced by inclusions, the IDSM directly reconstructs inhomogeneities.
It enhances the reconstruction accuracy by addressing the issue of source regularity, cf. Section \ref{sec:PRI}.

\item \textbf{Robust formulation}:
We replace global fractional Sobolev inner products with localized $L^{2}$-inner products.
This avoids the numerical instability of evaluating Sobolev inner product and ensures stability under noise while preserving the resolution of the method.

\item \textbf{Abstract framework}:
We formulate the method in an abstract framework that is applicable to many scenarios, e.g., nonlinear problems (e.g., power-law potentials) and mixed-type inclusions (e.g., both conductivity and potential anomalies).
It allows treating diverse physical scenarios systematically.
\end{itemize}

While the abstract framework builds on the IDSM \cite{Ito2025Iterative} in the elliptic setting, its extension to the parabolic setting presents fundamental new challenges.
First, parabolic Green's functions exhibit distinct anisotropic decay properties (e.g., Gaussian-type spatial decay with temporal scaling) compared to elliptic kernels, necessitating a different probing strategy. 
Second, the error accumulates over an extended time horizon, which causes instability under noise and long-term evolution.
Using the dynamic adaptation of the low-rank structure of the kernel and robust formulation via the localized $L^2$ inner product, the proposed IDSM enables stably and accurately computing the index function for noisy data over an extended time horizon.
Third, parabolic inverse problems for recovering moving inhomogeneities often rely on one single boundary measurement.
This inherently restricts the resolution of the index function.
To address this issue, we incorporate a local iteration within each time segment, whose computational overhead is minimal, since the iteration takes only one step in most segments.
Numerical experiments confirm the method can reliably track complex moving inhomogeneities with minimal computational overhead (4-6 PDE solves) and handles nonlinear/mixed-type cases.
In sum, it can achieve high accuracy, robustness, and efficiency, and represents an effective tool for solving a class of parabolic inverse problems.

The rest of the paper is structured as follows.
In Section \ref{sec:MAT}, we formulate parabolic inverse problems in an abstract framework.
In Section \ref{sec:PRI}, we outline the general principle of the IDSM for parabolic inverse problems.
In Section \ref{sec:IMP}, we describe the development of the IDSM.
We present numerical experiments in Section \ref{sec:NUM}, and conclude with discussions in Section \ref{sec:CON}.
Throughout, we use calligraphic symbols to denote operators, and capitalized letters for the associated integral kernels.
For any Hilbert space $H$ and Banach space $X$, we denote the inner product on $H$ by $(\cdot, \cdot)_{H}$ and the duality pairing between the dual space $X'$ of $X$ and $X$ by $\langle \cdot, \cdot \rangle_{X', X}$.

\section{Mathematical model}\label{sec:MAT}
In this section, we describe the mathematical framework for parabolic inverse problems.
Let $\Omega\subset \mathbb{R}^{d}$ ($d=2,3$) be an open bounded domain with a smooth boundary $\Gamma$, and $n$ denotes the unit outward normal derivative to the boundary $\Gamma$.
Fix the final
time $T$. Consider the following parabolic equation:
\begin{equation}\label{eqn1}
\partial_{t} y - \Delta y + N(y)u = f, \quad  \text{in } \Omega \times (0, T),
\end{equation}
equipped with the following initial and boundary conditions
\begin{equation}\label{eqn:ibc}
\left\{
\begin{aligned}
{\partial_n} y &= g, & \text{on }& \Gamma \times (0, T), \\
y &= h, & \text{in }& \Omega \times \{0\},
\end{aligned}
\right.
\end{equation}
where $f$ and $g$ denote the external sources applied to the domain $\Omega$ and the boundary $\Gamma$, respectively, subject to suitable compatibility conditions that ensure the regularity of the solution $y$.
The time-independent operator $N$ characterizes the inhomogeneity, and $u$ describes its spatiotemporal distribution as a piecewise constant function in space:
\begin{equation}\label{eqn10}
u(x, t) = \sum_{j=1}^{J} p_{j}(t) \chi_{\omega_{j}(t)}(x).
\end{equation}
Here, $\{p_{j}(t)\}_{j=1}^{J}$ are distinct, unknown time-dependent functions, and $\{\chi_{\omega_{j}(t)}\}_{j=1}^{J}$ denote the characteristic functions of disjoint open sets $\{\omega_{j}(t)\}_{j=1}^{J}$.
The sets $\{\omega_{j}(t)\}_{j=1}^{J}$ represent the spatial supports of the inhomogeneities at time $t$, which evolve within $\Omega$ with smooth boundaries and do not intersect the boundary $\Gamma$, i.e., $\mathrm{supp}(u) \cap \left(\Gamma \times [0, T]\right) = \varnothing$.
The temporal evolution of $\{\omega_{j}(t)\}_{j=1}^{J}$ and $\{p_{j}(t)\}_{j=1}^{J}$ describes various dynamic behavior, e.g., motion, deformation, disappearance ($\omega_{j} \to \emptyset$), fading ($p_{j} \to 0$), topological separation (splitting into two components) and merging (of disconnected components).
The inverse problem is to recover the locations and shapes of the inclusions $\{\omega_{j}\}_{j=1}^{J}$ from one or several pairs of lateral Cauchy data $y_{d}$, where $y_{d} \in L^{2}(0,T; L^{2}(\Gamma))$ denotes the boundary measurement of the solution $y$ to problem \eqref{eqn1}--\eqref{eqn:ibc}.

The operator $N$ provides an abstract framework for modeling moving inhomogeneities.
For any $y \in H^{1}(\Omega)$, $N(y)$ acts linearly on $u(\cdot,t) \in L^{\infty}(\Omega)$ and maps to $(H^{1}(\Omega))'$.
For example, for $N(y)u = -\nabla \cdot (u \nabla y)$, \eqref{eqn1} reads
\begin{equation}\label{eqn11}
\partial_{t} y - \nabla \cdot \left((1 + u) \nabla y \right) = f,\quad \text{in } \Omega \times (0,T),
\end{equation}
Under the positivity condition $u \geq \varepsilon_{0} > -1$, \eqref{eqn11} describes an inverse conductivity problem.
Nonlinear inhomogeneities can also be included.
For example, the nonlinear potential problem
\begin{equation}\label{eqn13}
\partial_{t} y - \Delta y + u |y|^{p-2} y = f, \quad \text{in } \Omega \times (0,T),
\end{equation}
is subsumed by \eqref{eqn1} with the choice $N(y)u = u |y|^{p-2} y$.
This model arises in laser-induced thermotherapy \cite{shuailu2019uniqueness}, where the nonlinear term captures the thermal effects induced by focused laser energy.
Furthermore, the abstract framework covers the presence of multiple inclusions of distinct physical nature by allowing vector-valued $N(y)$ and $u$.
For example, let $N(y) = (N_{c}(y), {N_{p}}(y))$ and $u = (u_{c}, u_{p})^{\top}$, with $N_{c}(y)u_{c} = -\nabla \cdot (u_{c} \nabla y)$ and $N_{p}(y)u_{p} = u_{p} y$.
Thus, $N(y)u = N_{c}(y)u_{c} + N_{p}(y)u_{p} = -\nabla \cdot (u_{c} \nabla y) + u_{p} y$.
This yields the following composite model:
\begin{equation}\label{eqn6}
\partial_{t} y - \nabla \cdot \left((1 + u_{c}) \nabla y \right) + u_{p} y = f,\quad \text{in } \Omega \times (0,T),
\end{equation}
which involves both conductivity and potential inhomogeneities.
The abstract form of $N(y)$ not only ensures the applicability of the method to a broad range of problems but also offers a systematic framework to address complex scenarios, e.g., nonlinearities, and composite inhomogeneities.

\section{General principle of the IDSM for parabolic inverse problems}\label{sec:PRI}
In this section, we establish the general principle of the IDSM for parabolic inverse problems of form \eqref{eqn1}.
\subsection{DSM}
Let $y(u) \in L^{2}(0,T; H^{1}(\Omega))$ be the solution to problem \eqref{eqn1}--\eqref{eqn:ibc} with the inhomogeneity $u$, and $u_{*}$ the exact inhomogeneity profile.
Let $\mathcal{T}$ be the trace operator mapping from $H^{1}(\Omega)$ to $L^{2}(\Gamma)$ or from $L^2(0, T; H^{1}(\Omega))$ to $L^2(0, T;L^{2}(\Gamma))$ when time is involved.
The range $L^{2}(\Gamma)$ is chosen (instead of $H^{1/2}(\Gamma)$) to align with the regularity of the noisy data $y_d$ given by
\begin{equation*}
y_{d} = \mathcal{T}y(u_{*}) + \varepsilon,
\end{equation*}
where $\varepsilon \in L^{2}(0,T; L^{2}(\Gamma))$ denotes noise.
Let $y_{\emptyset} = \mathcal{T}y(0)$ be the boundary value of the background solution of \eqref{eqn1}--\eqref{eqn:ibc} (i.e., $u = 0$).
The scattering field, defined by
\begin{equation*}
y^{s} = y_{\emptyset} - \mathcal{T}y(u_{*}),
\end{equation*}
measures the change of the boundary data induced by $u_*$, and its noisy version $y^{s}_{d}$ is given by
\begin{equation*}
y^{s}_{d} = y_{\emptyset} - y_{d}.
\end{equation*}
We denote by $S = N(y(u_{*}))u_{*} \in L^{2}(0,T; H^{1}(\Omega)')$ the source induced by the inclusion $u_{*}$.
Then the difference $y(0) - y(u_{*})$ satisfies 
\begin{equation}\label{eqn14}
\partial_{t} \left(y(0) - y(u_{*})\right) - \Delta \left(y(0) - y(u_{*})\right) = S, \quad \text{in } \Omega \times (0,T), 
\end{equation}
equipped with zero initial and boundary conditions.
The dependence of $y(0) - y(u_{*})$ on $S$ is linear, no matter whether $N$ is linear or not.
The linear relation is represented by the operator $\mathcal{F}\colon L^{2}(0,T; H^{1}(\Omega)') \mapsto L^{2}(0,T; H^{1}(\Omega))$, i.e., $y(0) - y(u_{*}) = \mathcal{F}S$.
The corresponding kernel $F$ is closely related to the heat kernel $\Phi(x,t)$, given by
\begin{equation*}
\Phi(x,t) = \chi_{\{t > 0\}} \frac{1}{(4\pi t)^{d/2}} \exp\left(-\frac{\|x\|^2}{4t}\right),
\end{equation*}
where $\|\cdot\|$ denotes Euclidean vector norm.
$F$ satisfies 
\begin{equation}\label{eqn22}
\left\{
\begin{aligned}
\partial_{t} F(\cdot; x', t') - \Delta F(\cdot; x', t') &= 0, & \text{in }& \Omega \times (0, T), \\
{\partial_n} F(\cdot; x', t') &= -{\partial_n} \Phi(t - t', x - x'), & \text{on }& \Gamma \times (0, T), \\
F(\cdot; x', t') &= 0, & \text{in }& \Omega \times \{0\}.
\end{aligned}
\right.
\end{equation}
Note that $F$ is also Green's function of the parabolic problem on the cylinder $\Omega\times(0,T)$.
$F(x,t; x',t')$ encodes the response at $(x,t)$ induced by a Dirac source located at $(x',t')$.
The kernel $F(x,t;x',t')$ and the associated operator $\mathcal{F}$ are connected via
\begin{equation*}
w(x,t) = \mathcal{F}v(x,t)  = \int_{0}^{T} \int_{\Omega} F(x,t; x',t') v(x',t') \, \mathrm{d}x' \, \mathrm{d}t'.
\end{equation*}
The adjoint kernel $F^{*}$, corresponding to the adjoint operator $\mathcal{F}^{*}$, is given by
\begin{equation*}
v (x,t)= \mathcal{F}^{*}w(x,t) = \int_{0}^{T} \int_{\Omega} F(x',t'; x,t) w(x',t') \, \mathrm{d}x' \, \mathrm{d}t'.
\end{equation*}

Then the noisy scattering field $y^{s}_{d}$ can be expressed by
\begin{equation*}
y^{s}_{d}(x,t) = \int_{0}^{T} \int_{\Omega} F(x,t; x',t') s(x',t') \, \mathrm{d}x' \, \mathrm{d}t' + \varepsilon(x,t), \quad \text{for } (x,t) \in \Gamma\times(0,T),
\end{equation*}
or equivalently, in an operator form, $y^{s}_{d} = \mathcal{T}\mathcal{F}S + \varepsilon$.
Existing works \cite{Zou2015, Zou2014, Zou2022, Zou2021} on the DSM focus on inverting the linear operator $\mathcal{T}\mathcal{F}$ to recover $S$ from $y^{s}_{d}$.
The probing operator $\mathcal{F}^{\dag}\colon L^{2}(0,T; L^{2}(\Gamma)) \mapsto L^{2}(0,T; (H^{1}(\Omega))')$ plays a central role in the DSM, such that $\mathcal{F}^{\dag}\mathcal{T}\mathcal{F} \approx \mathcal{I}$ (the identity operator), and then the index function
\begin{equation*}
\eta = \mathcal{F}^{\dag}y^{s}_{d} = \mathcal{F}^{\dag}\mathcal{T}\mathcal{F}S + \mathcal{F}^{\dag}\varepsilon \approx S
\end{equation*}
localizes the support of $S$.
Since $S$ may share the support with $u$, it can identify spatial regions containing the inclusion $u_{*}$. The construction neglects the regularity structure of the source $S$:
for the conductivity inclusion, $S = -\nabla \cdot (u_{*}\nabla y(u_{*})) \in L^{2}(0,T; H^{1}(\Omega)')$ and $S \notin L^{2}(0,T; L^{2}(\Omega))$, but for the potential inclusion, $S = u_{*}y(u_{*}) \in L^{2}(0,T; L^{2}(\Omega))$.
The index function $\eta$ of $S$ ignores the regularity structure, which may degrade the recovery accuracy.

\subsection{The proposed IDSM}
The IDSM constructs an index function for the inhomogeneity $u$ rather than the induced source $S$.
It exploits a direct link between the scattering field $y^{s}$ and the inclusion $u$, which is represented by 
\begin{equation*}
y_{\emptyset} - y(u_{*}) = \mathcal{T}\mathcal{F}\mathcal{N}(y(u_{*}))u_{*},
\end{equation*}
where for any $y \in L^{2}(0,T; H^{1}(\Omega))$, $\mathcal{N}(y): L^{\infty}(0,T; L^{\infty}(\Omega))\to L^{2}(0,T; H^{1}(\Omega)')$ is defined by
\begin{equation*}
\left(\mathcal{N}(y)u\right)(x, t) = N(y(x, t))u(x, t).
\end{equation*}
This extends the static operator $N(y)\colon H^{1}(\Omega) \mapsto (H^{1}(\Omega))'$ in Section \ref{sec:MAT} to a spatiotemporal setting.
The composition $\mathcal{H}[u]:=\mathcal{T}\mathcal{F}\mathcal{N}(y(u))$ and its kernel $H_{u}(x,t;x',t')=F(x,t;x',t')\mathcal{N}(y(u))(x',t')$ are called the forward operator and forward kernel, respectively.
It replaces the linear operator $\mathcal{F}$ with a semi-linear one and directly connects the scattering field $y^{s}$ with the inclusion $u_{*}$:
\begin{equation*}
y^{s}=\mathcal{H}[u_{*}]u_{*}.
\end{equation*}
Let $\hat{u}$ be an estimate obtained through the IDSM.
Solving problem \eqref{eqn1}--\eqref{eqn:ibc} with $\hat{u}$ yields the predicted field $y(\hat{u})$, whose deviation from $y(0)$ must satisfy
\begin{equation*}
\partial_{t} \left(y(0)-y(\hat{u})\right) - \Delta \left(y(0)-y(\hat{u})\right)=N(y(\hat{u}))\hat{u},\quad \text{in }\Omega\times (0,T),
\end{equation*}
equipped with the zero initial and boundary conditions.
That is, the scattering field $y_{\emptyset} - \mathcal{T}y(\hat{u})$ corresponds precisely to $\mathcal{T}\mathcal{F}N(y(\hat{u}))\hat{u}$.
The recovered source $N(y(\hat{u}))\hat{u}$ matches exactly the structure of the source induced by the exact one $u_{*}$.
It encodes the source structure, and enables consistently utilizing structural properties of $S$.

To reconstruct the evolution of the inclusion $u_{*}$, we must invert the semi-linear operator $\mathcal{H}[u_{*}]$, which encodes the relationship between the scattering field $y^{s}$ and inclusions $u_{*}$:
\begin{equation*}
y^s(x,t) = \int_{0}^{T} \int_{\Omega} H_{u_{*}}(x,t; x',t') u_{*}(x',t') \, \mathrm{d}x' \, \mathrm{d}t', \quad \text{for } (x,t) \in \Gamma\times(0,T).
\end{equation*}
The IDSM employs a dual product $\langle \cdot, \cdot \rangle_{\Gamma \times [0,T]}$ over the data space and a probing kernel $K$
to construct the index function.
For each sampling point $(x,t)\in \Omega\times[0,T]$, the probing kernel defines a probing function $K_{x,t}\triangleq K(x,t;\cdot)$, which is designed to satisfy the following near-orthogonality in the dual product $\langle \cdot, \cdot \rangle_{\Gamma \times [0,T]}$:
\begin{equation} \label{eqn3}
\langle K(x,t; \cdot), H_{u_{*}}(\cdot; x', t') \rangle_{\Gamma \times [0,T]}
\begin{cases}
\approx 1, & \text{if } (x,t) \text{ is close to } (x',t'), \\
\ll 1, & \text{if } (x,t) \text{ is far from } (x',t').
\end{cases}
\end{equation}
This property motivates defining an index function $\eta$ via the probing operator $\mathcal{K}$:
\begin{align*}
\eta(x,t) =&\left(\mathcal{K}y^{s}_{d}\right)(x,t) = \langle K(x,t; \cdot), y^s_d \rangle_{\Gamma \times [0,T]} \\
=& \langle K(x,t; \cdot), y^s \rangle_{\Gamma \times [0,T]} + \langle K(x,t; \cdot), \varepsilon \rangle_{\Gamma \times [0,T]} \\
\approx& \sum_{i=1}^{N} \frac{1}{N} \langle K(x,t; \cdot), H_{u_{*}}(\cdot; x_i, t_i) \rangle_{\Gamma \times [0,T]} u_{*}(x_i, t_i).
\end{align*}
If the property \eqref{eqn3} holds, then $\eta$ takes value $O(1)$ near the inclusion support $\{(x_i, t_i)\} \subset \mathrm{supp}(u_{*})$ and decays rapidly elsewhere.
Moreover, the integral form of $\eta$ mitigates the impact of noise.

\subsubsection{The probing function $K$ and the kernel $R$}
Existing works \cite{ItoJinZou:2012,ChowItoZou:2018sisc,ItoJinZou:2013,Zou2014,Zou2015,Zou2021,Zou2022} on wave and elliptic systems  have shown that Green's functions exhibit a near-orthogonality property in certain Sobolev inner products.
Specifically, for distinct points $(x_1,t_1)$ and $(x_2,t_2)$, the following relation holds:
\begin{equation}\label{eqn9}
\langle F(\cdot;x_1,t_1), F(\cdot;x_2,t_2) \rangle_{\Gamma \times [0,T]} \ll 1, \quad \text{if } |(x_1, t_1) - (x_2, t_2)| \text{ is large,}
\end{equation}
where $\langle \cdot,\cdot \rangle_{\Gamma \times [0,T]}$ denotes a Sobolev inner product defined on the lateral parabolic boundary $\Gamma\times[0,T]$.
This property extends to the kernel $H_{u_{*}}$: if $ |(x_1, t_1) - (x_2, t_2)|$ is large, then
\begin{align*}
{}&\langle H_{u_{*}}(\cdot;x_1,t_1), H_{u_{*}}(\cdot;x_2,t_2) \rangle_{\Gamma \times [0,T]} \\
=& \langle \mathcal{N}(y(u_{*}))(x_{1},t_{1})F(\cdot;x_1,t_1), \mathcal{N}(y(u_{*}))(x_{2},t_{2})F(\cdot;x_2,t_2) \rangle_{\Gamma \times [0,T]}\\
=&\mathcal{N}(y(u_{*}))(x_{1},t_{1})\mathcal{N}(y(u_{*}))(x_{2},t_{2})\langle F(\cdot;x_1,t_1), F(\cdot;x_2,t_2) \rangle_{\Gamma \times [0,T]}\ll 1.
\end{align*}
We define the Gramian kernel $G_{u}$ associated with $H_{u}$ by
\begin{equation} \label{eqn16}
G_{u}(x,t; x',t') = \langle H_{u}(\cdot;x,t), H_{u}(\cdot;x',t') \rangle_{\Gamma \times [0,T]},
\end{equation}
which is symmetric and positive semidefinite.
Then we can define a resolver kernel $R$ of delta-type:
\begin{equation*}
R(x,t; x',t') = \frac{\delta_{(x-x', t-t')}}{G_{u_{*}}(x,t; x,t)}.
\end{equation*}
We construct the probing kernel $K$ by composing the adjoint of $H_{u_{*}}$ with $R$:
\begin{equation}\label{eqn8}
K(x,t; x',t') = \int_{\Omega} \int_{0}^{T} R(x,t; x'',t'') H_{u_{*}}(x',t'; x'',t'') \, \mathrm{d}x'' \, \mathrm{d}t''.
\end{equation}
Thus the relation \eqref{eqn3} is satisfied:
\begin{align*}
&\langle K(x,t; \cdot), H_{u_{*}}(\cdot; x', t') \rangle_{\Gamma \times [0,T]}\\
= &\frac{\langle H_{u_{*}}(\cdot;x,t), H_{u_{*}}(\cdot;x',t') \rangle_{\Gamma \times [0,T]}}{\langle H_{u_{*}}(\cdot;x,t), H_{u_{*}}(\cdot;x,t) \rangle_{\Gamma \times [0,T]}}
\begin{cases}
\approx 1, & \text{when } (x,t) \text{ is close to } (x',t'), \\
\ll 1, & \text{when } (x,t) \text{ is far from } (x',t').
\end{cases}
\end{align*}
Generally computing Green’s function at each sampling point is expensive.
Instead, the trace of Green’s function can be approximated by
$\frac{1}{G_{u_{*}}(x,t; x,t)} \approx d(x,\Gamma)^{\nu}$,
where the exponent $\nu>0$ is empirically determined.
The positivity of $G_{u_{*}}$ ensures that $R$ is positive and symmetric.
Further, the delta form of $R$ facilitates an efficient evaluation:
\begin{equation} \label{eqn7}
\int_{0}^{T} \int_{\Omega} R(x,t; x',t') v(x',t') \, \mathrm{d}x' \, \mathrm{d}t' = \frac{v(x,t)}{d(x,\Gamma)^{\nu}}.
\end{equation}

While this choice of $R$ is efficient, its accuracy is inadequate for time-dependent problems, due to error accumulation.
Thus, we modify the kernel $R$ to enhance the accuracy.
The structure of $K$ in \eqref{eqn8} and the relation \eqref{eqn3} necessitate that $R$ satisfies
\begin{equation} \label{eqn4}
\int_{0}^{T} \int_{\Omega} R(x,t; x'',t'') G_{u_{*}}(x'',t''; x',t') \, \mathrm{d}x'' \, \mathrm{d}t''
\begin{cases}
\approx 1, & \text{if } (x,t) \text{ is close to } (x',t'), \\
\ll 1, & \text{if } (x,t) \text{ is far from } (x',t').
\end{cases}
\end{equation}
That is, $R$ is an approximate inverse of $G_{u_{*}}$, i.e., $\mathcal{R}\mathcal{G}[u_{*}] \approx \mathcal{I}$, the identity operator.
Additionally, $R$ must retain the efficiency, since a full convolution over $\Omega \times [0,T]$ is expensive.
We propose a hybrid kernel structure that combines singular and low-rank components:
\begin{equation}\label{eqn:low-rank}
R(x,t;x',t') = D(x,t) \delta_{(x-x',t-t')} + \sum_{j=1}^{K} m_{j}(x,t) n_{j}(x',t').
\end{equation}
This formulation maintains the efficiency through separable operations:
\begin{equation*}
\int_{0}^{T}\!\!\! \int_{\Omega} R(x,t;x',t') v(x',t') \, \mathrm{d}x' \, \mathrm{d}t' = D(x,t) v(x,t) + \sum_{j=1}^{K} \left(\int_{0}^{T} \!\!\!\!\int_{\Omega} n_{j}(x',t') v(x',t') \, \mathrm{d}x' \, \mathrm{d}t' \right) m_{j}(x,t),
\end{equation*}
where $\{m_j, n_j\}_{j=1}^{K}$ is designed to ensure kernel symmetry and positive semi-definiteness.
Numerically, it greatly improves the accuracy compared to the kernel in \eqref{eqn7}, and enables stably evaluating the index functions $\eta$ over an extended time horizon.

\subsubsection{Addressing the nonlinearity of $\mathcal{H}$}
In practice, since the knowledge of $y(u_*)$ is limited to the boundary $\Gamma \times [0,T]$, directly computing $\mathcal{N}(y(u_*))$ in the forward kernel $H_{u_*}$ is infeasible.
To overcome this challenge, we define a fixed-point iterative scheme:
\begin{equation}\label{eqn23}
\eta^{k+1} = \mathcal{R}\mathcal{H}[u^k]^* y^{s}_{d}.
\end{equation}
Given an initial guess $u^0 = 0$, the $(k+1)$th index function of the IDSM is computed as
\begin{equation*}
\eta^{k+1} = \mathcal{R} \mathcal{N}(y(u^{k}))^{*}\left(\mathcal{T}\mathcal{F}\right)^{*}y^{s}_{d}.
\end{equation*}
The detail of the iteration is given in Section \ref{subsec:cro}.
After obtaining $\eta^{k+1}$, a pointwise projection operator $\mathcal{P}$ is applied, to enforce box constraints (i.e., prior knowledge of the inclusion), yielding $u^{k+1} = \mathcal{P}(\eta^{k+1})$.
For example, in the conductivity problem in Section \ref{sec:MAT}, it may take the form
\begin{equation*}
u_{c}^{k+1} = \mathcal{P}(\eta^{k+1}) = \max(\eta^{k+1}, -1 + \varepsilon_{0}),
\end{equation*}
where $\varepsilon_{0} > 0$ is small.
This ensures the positivity of the conductivity.
The iterative process terminates when the relative error between the boundary measurements of $y(u^k)$ and the data $y_d$ falls below a given tolerance $\epsilon_{\text{tol}}$.
The iterative process progressively refines the estimate $u^k$.

\section{Practical aspects of the IDSM}\label{sec:IMP}
Now we develop the key components of the IDSM.

\subsection{The selection of the dual product}\label{subsec:sdp}

One key component in the IDSM is a suitable dual product $\langle\cdot, \cdot\rangle_{\Gamma\times [0,T]}$, which ensures the near-orthogonality of probing functions, balances numerical stability under noise, and directly impacts the reconstruction accuracy. For elliptic inverse problems \cite{Zou2021,Zou2022,Zou2015,Zou2014,ChowItoZou:2018sisc}, fractional Sobolev inner products were employed to enhance the near-orthogonality, cf. \eqref{eqn9}.
However, it can lead to significant errors in the presence of data noise.
For $N(y)u = uy$, the source $S = uy$ lies in $L^{2}(0,T; L^{2}(\Omega))$.
From \eqref{eqn14}, the difference $y(0) - y(u_{*})$ belongs to $L^{2}(0,T; H^{1}(\Omega))$ \cite[Theorem 5, Chapter 7]{Evans2010}.
Thus, the scattering field $y^{s}$ lies in $L^{2}(0,T; H^{3/2}(\Gamma))$.
In practice, we only have access to the noisy scattering field $y^{s}_{d} = y^{s} + \varepsilon \in L^{2}(0,T; L^{2}(\Gamma))$.
This causes errors in the index function $\eta$ through the term $\langle K, \varepsilon \rangle_{\Gamma \times [0,T]}$:
If $\langle \cdot, \cdot \rangle_{\Gamma \times [0,T]}$ involves derivatives, the noise in $y^{s}_{d}$ can lead to numerical instability.
To mitigate this issue, we adopt the $L^{2}$-inner product $\langle \cdot, \cdot \rangle_{\Gamma \times [0,T]} = (\cdot, \cdot)_{L^{2}(0,T; L^{2}(\Gamma))}$.
To compensate the lack of the near-orthogonality, we divide the time interval $[0,T]$ into smaller segments, and observe the relation:
\begin{equation*}
\lim_{\delta t \to 0^{+}} \left(F(x_{1},t; \cdot), F(x_{2}, t; \cdot) \right)_{L^{2}(0,\delta t; L^{2}(\Gamma))} = 0, \quad \forall x_{1} \neq x_{2},
\end{equation*}
since $\lim_{t \to t'} F(x,t; x',t') = \delta(x - x')$.
Thus, for small $\delta t$, the kernel $F$ (and hence $H_{u}$) exhibits near-orthogonality in the $L^{2}$ sense over each short time segment.
To this end, we partition $[0,T]$ into subintervals $\{(n\delta t, (n+1)\delta t]\}_{n=0}^{T/\delta t - 1}$ and apply the IDSM sequentially to each segment. Numerical experiments show that a properly chosen $\delta t$ can achieve the near-orthogonality of the kernel $H_{u}$ and the robustness against noise. In practice, after computing $\eta$ on a given segment, we can solve \eqref{eqn1} with $u_{*} \approx\mathcal{P}(\eta)$.
The terminal value $y(\cdot, (n+1)\delta t)$ from the current segment is then used as the initial condition for the IDSM in $((n+1)\delta t, (n+2)\delta t]$.

The IDSM involves two "iterative" components: temporal segment iterations and the iterative scheme \eqref{eqn23} to address the nonlinearity and to update the low-rank terms in $\mathcal{R}$, cf. Section \ref{subsec:cro}.
Below we use the superscript $n,k$ for functions associated with the $k$th iteration within the segment $(n\delta t, (n+1)\delta t]$: $\eta^{n,k}$ denotes the index computed at the $k$th iteration in the $n$th time segment $(n\delta t, (n+1)\delta t]$ via the IDSM.
The solution to \eqref{eqn1} in the segment, for the inclusion parameter $u^{n,k}$ and initialized by the solution from the preceding segment (or by $h$ for $n=0$), is denoted by $y^{n}(u^{n,k})$.
The superscripts will be occasionally omitted below.

\subsection{The adjoint of $\mathcal{H}[u]$}\label{subsec:ahu}
We describe an efficient approach to evaluate the adjoint of $\mathcal{H}[u]$:
\begin{equation*}
\zeta^{n,k+1} = \mathcal{H}[u^{k}]^{*} y^{s}_{d}.
\end{equation*}
The function $\zeta^{n,k}$ denotes the $k$th local dual function in the $n$th time segment.
The derivation is presented for the first time segment $(0, \delta t)$, and it is similar for the remaining ones.
The operator $\mathcal{H}[u]$ is the composition of $\mathcal{N}(y(u))$, $\mathcal{F}$ and $\mathcal{T}$, and its adjoint is given by
\begin{equation*}
\mathcal{H}[u]^* = \mathcal{N}(y(u))^* \left(\mathcal{T}\mathcal{F}\right)^*.
\end{equation*}
The operator $\mathcal{F}$ is associated with Green's function $F$ defined in \eqref{eqn22}.
However, solving
\eqref{eqn22} for every sampling point $(x',t')$ is prohibitive.
We employ the following result, which facilitates efficiently computing the adjoint of $\mathcal{T}\mathcal{F}$ by solving only one backward problem.
\begin{lemma}\label{lemma1}
For $y \in L^{2}(0, \delta t; L^{2}(\Gamma))$, let $z\in L^{2}(0,\delta t;H^{1}(\Omega))$ solve the backward heat equation:
\begin{equation}\label{eqn5}
\left\{
\begin{aligned}
\partial_{t} z + \Delta z &= 0, & \text{in }& \Omega \times (0, \delta t), \\
{\partial_n} z &= y, & \text{on }& \Gamma \times (0, \delta t), \\
z &= 0, & \text{in }& \Omega \times \{\delta t\}.
\end{aligned}
\right.
\end{equation}
Then, $z(x, t) = ( F(\cdot; x, t), y )_{L^{2}(0, \delta t; L^{2}(\Gamma))} = (\mathcal{T}\mathcal{F})^{*} y$.
\end{lemma}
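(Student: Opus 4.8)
The plan is to prove the operator identity $(\mathcal{T}\mathcal{F})^{*}y=z$ directly from the weak formulations of the forward and backward problems by a space--time Green's identity, and then to read off the kernel representation via Fubini's theorem.

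First I would record the kernel form of the adjoint. Since $\mathcal{F}$ has kernel $F$ and $\mathcal{T}$ is the trace, Fubini's theorem gives, for every $S\in L^{2}(0,\delta t;H^{1}(\Omega)')$,
\[
(\mathcal{T}\mathcal{F}S,\, y)_{L^{2}(0,\delta t;L^{2}(\Gamma))}=\int_{0}^{\delta t}\!\!\int_{\Omega} S(x',t')\,\Big(\int_{0}^{\delta t}\!\!\int_{\Gamma} F(x,t;x',t')\,y(x,t)\,\d s(x)\,\d t\Big)\,\d x'\,\d t',
\]
so that $((\mathcal{T}\mathcal{F})^{*}y)(x,t)=(F(\cdot;x,t),\,y)_{L^{2}(0,\delta t;L^{2}(\Gamma))}$. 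It then remains to identify this function with the solution $z$ of \eqref{eqn5}. I would first note that \eqref{eqn5} is well posed: the time reversal $\tau=\delta t-t$ turns it into a forward heat equation with homogeneous initial data and Neumann datum $y$, so a unique $z\in L^{2}(0,\delta t;H^{1}(\Omega))$ with $\partial_{t}z\in L^{2}(0,\delta t;H^{1}(\Omega)')$ exists and, in particular, $z\in C([0,\delta t];L^{2}(\Omega))$, which gives meaning to the terminal value $z(\delta t)=0$.

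The heart of the argument is to pair $w:=\mathcal{F}S$ against $z$. By \eqref{eqn14}, $w$ solves $\partial_{t}w-\Delta w=S$ with $\partial_{n}w=0$ on $\Gamma$ and $w(0)=0$, and $\mathcal{T}\mathcal{F}S=w|_{\Gamma\times(0,\delta t)}$. Writing $\mathcal{L}=\partial_{t}-\Delta$ and $\mathcal{L}^{*}=-\partial_{t}-\Delta$, the Lagrange identity $z\mathcal{L}w-w\mathcal{L}^{*}z=\partial_{t}(wz)-\nabla\cdot(z\nabla w-w\nabla z)$, integrated over $\Omega\times(0,\delta t)$, yields
\[
\int_{0}^{\delta t}\!\langle S,z\rangle\,\d t=\int_{\Omega}\big[wz\big]_{t=0}^{t=\delta t}\,\d x-\int_{0}^{\delta t}\!\!\int_{\Gamma}\big(z\,\partial_{n}w-w\,\partial_{n}z\big)\,\d s\,\d t,
\]
where I used $\mathcal{L}w=S$ and $\mathcal{L}^{*}z=0$ (the backward equation). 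The temporal boundary term vanishes because $w(0)=0$ and $z(\delta t)=0$; the spatial boundary term reduces, using $\partial_{n}w=0$ and $\partial_{n}z=y$, to $\int_{0}^{\delta t}\!\int_{\Gamma}w\,y\,\d s\,\d t=(\mathcal{T}\mathcal{F}S,y)_{L^{2}(0,\delta t;L^{2}(\Gamma))}$. Hence $(\mathcal{T}\mathcal{F}S,y)=\int_{0}^{\delta t}\langle S,z\rangle\,\d t$ for all $S$, i.e.\ $(\mathcal{T}\mathcal{F})^{*}y=z$, and combining with the kernel computation gives $z(x,t)=(F(\cdot;x,t),y)_{L^{2}(0,\delta t;L^{2}(\Gamma))}$.

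The main obstacle is making this integration by parts rigorous at the stated regularity. Since $S$ lies only in the dual space $L^{2}(0,\delta t;H^{1}(\Omega)')$, the interior term $\int zS$ must be interpreted as the duality pairing $\int_{0}^{\delta t}\langle S(\cdot,t),z(\cdot,t)\rangle_{(H^{1}(\Omega))',H^{1}(\Omega)}\,\d t$, and the normal traces $\partial_{n}w,\partial_{n}z$ and the time slices $w(0),z(\delta t)$ require justification. I would resolve this by density: establish the identity first for smooth $S$ and $y$, where every term is classical, and then pass to the limit using the a priori bounds $\|w\|_{L^{2}(0,\delta t;H^{1}(\Omega))}\lesssim\|S\|_{L^{2}(0,\delta t;H^{1}(\Omega)')}$ and $\|z\|_{L^{2}(0,\delta t;H^{1}(\Omega))}\lesssim\|y\|_{L^{2}(0,\delta t;L^{2}(\Gamma))}$ from parabolic regularity theory, together with the embedding $w,z\in C([0,\delta t];L^{2}(\Omega))$ that makes the terminal and initial values meaningful.
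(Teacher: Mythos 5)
Your proposal is correct and follows essentially the same route as the paper: both identify $(\mathcal{T}\mathcal{F})^{*}y$ with $z$ by testing $w=\mathcal{F}S$ against $z$ and integrating by parts in space and time, using $w(0)=0$, $z(\delta t)=0$, $\partial_n w=0$ and $\partial_n z=y$ to reduce everything to the boundary term $\int_0^{\delta t}\!\int_\Gamma wy$. Your additional care with the duality pairing, the density argument, and the Fubini step for the kernel representation fills in details the paper leaves implicit, but it is the same proof.
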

\begin{proof}
It suffices to prove the relation $( z, v )_{L^{2}(0, \delta t; L^{2}(\Omega))} = ( y, \mathcal{T}\mathcal{F} v )_{L^{2}(0, \delta t; L^{2}(\Gamma))}$ for all smooth functions $v$ defined on $\Omega\times(0,T)$.
Let $w$ solve
\begin{equation*}
\left\{
\begin{aligned}
\partial_{t} w - \Delta w &= v, & \text{in }& \Omega \times (0, \delta t), \\
{\partial_n} w &= 0, & \text{on }& \Gamma \times (0, \delta t), \\
w &= 0, & \text{in }& \Omega \times \{0\}.
\end{aligned}
\right.
\end{equation*}
By the definition of $\mathcal{F}$, we have $w = \mathcal{F} v$.
By integration by parts, we derive
\begin{align*}
( z, v )_{L^{2}(0, \delta t; L^{2}(\Omega))} &= \int_{0}^{\delta t} \int_{\Omega} z v \, \mathrm{d}x \, \mathrm{d}t = \int_{0}^{\delta t} \int_{\Omega} z \left(\partial_{t} w - \Delta w\right) \, \mathrm{d}x \, \mathrm{d}t \\
&= -\int_{0}^{\delta t} \int_{\Omega} w \partial_{t} z \, \mathrm{d}x \, \mathrm{d}t - \int_{0}^{\delta t} \left(\int_{\Omega} w \Delta z \, \mathrm{d}x + \int_{ \Gamma} {\partial_n} z \, w \, \mathrm{d}x\right) \, \mathrm{d}t \\
&= \int_{0}^{\delta t} \int_{ \Gamma} y w \, \mathrm{d}x \, \mathrm{d}t = \int_{0}^{\delta t} \int_{ \Gamma} y \mathcal{F} v \, \mathrm{d}x \, \mathrm{d}t
= ( y, \mathcal{T}\mathcal{F} v )_{L^{2}(0, \delta t; L^{2}(\Gamma))},
\end{align*}
which proves the relation $z = (\mathcal{T}\mathcal{F})^{*} y$.
\end{proof}

Next we derive the adjoint of $\mathcal{N}(y)$ for the case with $L$ inhomogeneity types $\{N_{\ell}\}_{\ell=1}^{L}$, and one measurement associated with $(f, g, h)$.
Then
$N(y) = \left(N_{1}(y), \dots, N_{L}(y)\right)$ and $ u = \left(u_{1}, \dots, u_{L}\right)^{\top}$,
where each  $u_{\ell}$ is of the form \eqref{eqn10}.
The governing equation \eqref{eqn1} reads
\begin{equation*}
\partial_{t} y - \Delta y + \sum_{\ell=1}^{L} N_{\ell}(y) u_{\ell} = f, \quad \text{in } \Omega \times (0, \delta t)
\end{equation*}
The forward operator $\mathcal{H}[u]$ is given by
\begin{equation*}
y^{s} = y_{\emptyset} - \mathcal{T} y_{*} = \mathcal{T}\mathcal{F} \sum_{\ell=1}^{L} N_{\ell}(y) u_{\ell}.
\end{equation*}
To compute $\zeta = \mathcal{H}[u]^{*} y^{s}$, we first solve \eqref{eqn5} for the scattering field $y^{s}$.
This step evaluates the action of the adjoint of $\mathcal{T}\mathcal{F}$, yielding $z = \left(\mathcal{T}\mathcal{F}\right)^{*} y^{s}$.
The adjoint of $\mathcal{H}[u]$ is then given by
\begin{equation*}
\begin{pmatrix}
\zeta_{1} \\
\vdots \\
\zeta_{L}
\end{pmatrix}
\triangleq
\mathcal{H}[u]^{*}y^s
= \begin{pmatrix}
\mathcal{N}_{1}(y(u))^{*} \\
\vdots  \\
\mathcal{N}_{L}(y(u))^{*} 
\end{pmatrix}
z .
\end{equation*}
Thus, for the $\ell$th type  inclusion, the $\ell$th local dual function is $\zeta_{\ell} =\mathcal{N}_{\ell}(y(u))^{*} z$.

For the adjoint operator $\mathcal{N}_{\ell}(y(u))^{*}$, we further illustrate the derivation on the model \eqref{eqn6} in Section \ref{sec:MAT}, for which $N_{c}(y)u = -\nabla \cdot (u \nabla y)$ for the conductivity inclusion and $N_{p}(y)u = u y$ for the potential inclusion.
For any $p \in L^{2}(0,T; H^{1}(\Omega))$, the duality pairings associated with $\mathcal{N}_{c}$ and $\mathcal{N}_{p}$ are given by
\begin{equation*}
\langle \mathcal{N}_{c}(y)u, p \rangle = \int_{0}^{T} \int_{\Omega} u \nabla p \cdot \nabla y \, \mathrm{d}x \, \mathrm{d}t \quad \text{and} \quad \langle \mathcal{N}_{p}(y)u, p \rangle = \int_{0}^{T} \int_{\Omega} u p y \, \mathrm{d}x \, \mathrm{d}t,
\end{equation*}
respectively,
where $\langle \cdot, \cdot \rangle$ denotes the duality pairing between $L^{2}(0,T; H^{1}(\Omega))$ and its dual space.
These identities yield the adjoint actions of $\mathcal{N}_{c}(y)^{*}$ and $\mathcal{N}_{p}(y)^{*}$:
\begin{equation*}
\begin{aligned}
\langle \mathcal{N}_{c}(y)^{*} p, u \rangle &= \int_{0}^{T} \int_{\Omega} u \nabla p \cdot \nabla y \, \mathrm{d}x \, \mathrm{d}t \quad \Rightarrow \quad \mathcal{N}_{c}(y)^{*} p = \nabla p \cdot \nabla y, \\
\langle \mathcal{N}_{p}(y)^{*} p, u \rangle &= \int_{0}^{T} \int_{\Omega} u p y \, \mathrm{d}x \, \mathrm{d}t \quad \Rightarrow \quad \mathcal{N}_{p}(y)^{*} p = p y,
\end{aligned}
\end{equation*}
where $\langle \cdot, \cdot \rangle$ denotes the duality pairing between $L^{\infty}((0, T) \times \Omega)$ and its dual space.
After solving the backward problem \eqref{eqn5} for the scattering field $y^{s}$, the local dual function $\zeta$ is given by
\begin{equation}
\zeta = \mathcal{H}[u]^{*}
y^{s}
= \begin{pmatrix}
\zeta_{c} \\
\zeta_p
\end{pmatrix},
\quad \text{with }
\zeta_{c} = \nabla z \cdot \nabla y(u),\,\,
\zeta_p = z y(u).
\label{eqn15}
\end{equation}
Thus, $\mathcal{H}[u]^{*}$ generates distinct index functions for each type of inhomogeneity, and the conductivity and potential inclusions are reconstructed separately via $\zeta_c = \nabla z \cdot \nabla y(u)$ and $\zeta_p = z y(u)$, which mitigates cross-coupling artifacts.
The decoupling is essential for accurately resolving composite models involving multiple physical anomalies; see Section \ref{sec:NUM} for numerical illustrations.

\subsection{The construction of $\mathcal{R}$}\label{subsec:cro}
Now we construct the low-rank correction to the kernel $R$.
Given the evolution nature of parabolic equations, the IDSM constructs $\{m_j, n_j\}_{j=1}^K$ in \eqref{eqn:low-rank} on the fly.
To motivate the choice, we define an auxiliary scattering field $\hat{y}^s$ associated with $\hat{u}$ by $\hat{y}^s = y_\emptyset - \mathcal{T}y(\hat{u})$.
Following the preceding derivation, we obtain
\begin{equation*}
\hat{y}^s = \mathcal{T}\mathcal{F}\mathcal{N}(y(\hat{u}))\hat{u} = \mathcal{H}[\hat{u}]\hat{u}.
\end{equation*}
We compute the auxiliary local dual function $\hat{\zeta}$ for $\hat{y}^{s}$ as
$\hat{\zeta} = \mathcal{H}[\hat{u}]^* \hat{y}^s$. This establishes the identity $\hat{\zeta} = \mathcal{H}[\hat{u}]^* \hat{y}^{s} = \mathcal{G}[\hat{u}]\hat{u}$, where $\mathcal{G}[u]$ denotes the Gramian operator, cf. \eqref{eqn16}.
The auxiliary pair $(\hat{u}, \hat{\zeta})$ allows refining the kernel $\mathcal{R}$ using the relation
\begin{equation}\label{eqn17}
\mathcal{P}(\mathcal{R}\hat{\zeta}) = \hat{u}.
\end{equation}
This relation \eqref{eqn4} requires $\mathcal{R}$ to approximate the inverse of $\mathcal{G}[u_*]$.
In \eqref{eqn17}, $\mathcal{P}\circ\mathcal{R}$ invert $\mathcal{G}[\hat{u}]$ exactly: it provides an exact prediction $\mathcal{P}(\mathcal{R}\hat{\zeta})$ for the problem involving $\hat{u}$, which may be used to construct a more accurate index function $\eta = \mathcal{R}\zeta$.
However, the relation \eqref{eqn17} alone is insufficient to uniquely determine $\mathcal{R}$.
We employ dynamic low-rank corrections to enrich the kernel ${R}$.

In the IDSM, we construct the auxiliary problem on the fly in order to construct the low-rank correction.
Within the iterative scheme \eqref{eqn23}, the scattering field of the inhomogeneity $u^{n,k}$ may be used to update the kernel $R$ during the $k$th iteration of the $n$th time segment.
Specifically, in the $n$th time segment $(n\delta t, (n+1)\delta t]$, we first initialize $u^{n,0}=0$ and compute the background solution $y^{n}(0)$, using the initial value provided by the previous segment (or $h$ for the first segment).
This enables computing $y^{s,n}_{d} = y_{\emptyset}^{n} - y_{d}^{n}$.
Following the discussion in Section \ref{subsec:ahu}, the first local dual function $\zeta^{n,1}$ is derived in two steps.
First, we solve \eqref{eqn5} to obtain $z^{n}$, and then compute
\begin{equation}\label{eqn18}
\zeta^{n,1} = \mathcal{N}(y^{n}(u^{n,0}))^{*} z^{n}= \mathcal{N}(y^{n}(0))^{*} z^{n},
\end{equation}
where $y^{n}(u^{n,0})$ approximates the ground truth $y^{n}(u_{*}^{n})$ in $\mathcal{N}(y^{n}(u^{n,0}))^{*}$.
The first index function $\eta^{n,1}$ in the time segment is then computed as
\begin{align}
\eta^{n,1}(x, t) &= \int_{n\delta t}^{(n+1)\delta t} \int_{\Omega} R(x, t; x', t') \zeta^{n,1}(x, t) \, \mathrm{d}x' \, \mathrm{d}t' \notag \\
&= D(x, t) \zeta^{n,1}(x, t) + \sum_{j=1}^{K} \int_{n\delta t}^{(n+1)\delta t} \int_{\Omega} n_{j}(x', t') \zeta^{n,1}(x', t') \, \mathrm{d}x' \, \mathrm{d}t' \, m_{j}(x, t). \label{eqn21}
\end{align}
In practice, $R$ is initialized to $R = D(x, t) \delta_{x - x', t - t'}$.
Using the index function $\eta^{n,1}$, we obtain an estimate $u^{n,1}=\mathcal{P}(\eta^{n,1})$ of $u_{*}^{n}$, and compute \eqref{eqn1} over the interval $(n\delta t, (n+1)\delta t]$ to compute $y^{n}(u^{n,1})$.
If the relative error $\|y^{n}(u^{n,1}) - y_{d}^{n}\|_{L^{2}}/\|y_{d}^{n}\|_{L^{2}}$ falls below a predefined tolerance $\epsilon_{\text{tol}}$, $u^{n,1}$ is deemed an accurate estimate on the current segment.
However, if it exceeds $\epsilon_{\text{tol}}$, $\mathcal{R}$ is insufficiently accurate as an inverse of $\mathcal{G}[u_{*}^{n}]$.
Then an auxiliary scattering field $\hat{y}^{s,n,1}$ is defined by $$\hat{y}^{s,n,1} := y_{\emptyset}^{n} - \mathcal{T} y^{n}(u^{n,1}) = \mathcal{H}[u^{n,1}] u^{n,1}.$$
We obtain $\hat{z}^{n,1}$ by solving problem \eqref{eqn5} again with the boundary flux $\hat{y}^{s,n,1}$, and then compute the first auxiliary local dual function $\hat{\zeta}^{n,1} = \mathcal{H}[u^{n,1}]^{*} \hat{y}^{s,n,1} = \mathcal{G}[u^{n,1}] u^{n,1}$.

To maintain the relation \eqref{eqn4}, we update $R$ using the relation $\mathcal{P}(\mathcal{R} \hat{\zeta}^{n,1}) = u^{n,1}$.
In the work \cite{Ito2025Iterative}, motivated by the analogy between the relation $\mathcal{P}(\mathcal{R} \hat{\zeta}^{n,1}) = u^{n,1}$ and the secant equation in quasi-Newton methods, we employ Broyden-type low-rank update, e.g., Davidon–Fletcher–Powell (DFP) and Broyden–Fletcher–Goldfarb–Shanno (BFGS) methods, to enrich the low-rank structure of the kernel ${R}$. In quasi-Newton methods, the Hessian approximation $H^{k+1}$ satisfies the secant condition $H^{k+1}\delta g^{k} = \delta x^{k}$, where $\delta g^{k} = g^{k+1} - g^{k}$ and $\delta x^{k} = x^{k+1} - x^{k}$ denote incremental changes in gradients and independent variables, respectively.
The Broyden type updates take the form (cf. \cite[p. 149]{Wright2006} and \cite[p. 169]{Schnabel1996}):
\begin{align*}
(\text{DFP})\quad H^{k+1} &= H^{k} + \frac{\delta x^{k} \delta x^{k\top}}{\delta g^{k\top} \delta x^{k}} - \frac{H^{k} \delta g^{k} (H^{k}\delta g^{k})^\top }{\delta g^{k\top} H^{k} \delta g^{k}} ,\\
(\text{BFG})\quad H^{k+1} &= H^k + \left(1+\frac{\delta g^{k\top}H^{k}\delta g^{k}}{\delta g^{k\top} \delta x^k}\right) \frac{\delta x^k \delta x^{k\top}}{\delta g^{k\top} \delta x^k} - \frac{\delta x^k (H^k\delta g^k)^\top + H^k \delta g^k \delta x^{k\top}}{\delta g^{k\top} \delta x^k}.
\end{align*}
Analogously, in the IDSM, the low-rank components of $\mathcal{R}$ are defined by
\begin{align}
\delta R_{\rm DFP}(x,t;x',t') &= \frac{\hat{\eta}^{n,1}(x,t)\hat{\eta}^{n,1}(x',t')}{(\hat{\zeta}^{n,1},\hat{\eta}^{n,1})}-\frac{\mathcal{R}\hat{\zeta}^{n,1}(x,t)\mathcal{R}\hat{\zeta}^{n,1}(x',t')}{(\hat{\zeta}^{n,1},\mathcal{R}\hat{\zeta}^{n,1})},\label{eqn:DFP}\\
\delta R_{\rm BFG}(x,t;x',t') &= \left(1+\frac{(\hat{\zeta}^{n,1},\mathcal{R}\hat{\zeta}^{n,1})}{(\hat{\zeta}^{n,1}, \hat{\eta}^{n,1})}\right)\frac{\hat{\eta}^{n,1}(x,t)\hat{\eta}^{n,1}(x',t')}{(\hat{\zeta}^{n,1}, \hat{\eta}^{n,1})} \nonumber \\
&\quad -\frac{\hat{\eta}^{n,1}(x,t)\mathcal{R}\hat{\zeta}^{n,1}(x',t') + \mathcal{R}\hat{\zeta}^{n,1}(x,t)\hat{\eta}^{n,1}(x',t')}{(\hat{\zeta}^{n,1}, \hat{\eta}^{n,1})},\label{eqn:BFG}
\end{align}
where $(\cdot, \cdot)$ denotes the $L^{2}(n\delta t, (n+1)\delta t; L^{2}(\Omega))$ inner product, and $\hat{\eta}^{n,1}$ is defined by
\begin{equation}\label{eqn20}
\hat{\eta}^{n,1}\in\arg\min\|\hat{\eta}^{n,1}-\mathcal{R}\hat{\zeta}^{n,1}\|_{L^2(\Omega)}^{2} \quad \text{subject to} \quad \mathcal{P}(\hat{\eta}^{n,1})=u^{n,1}.
\end{equation}
These corrections are computationally efficient \cite[Chapter 6]{Wright2006} and preserve the symmetry and positive definiteness of the kernel $R$. Broyden's methods produce identical iterative sequences for the objective function, irrespective of whether the function is quadratic or not \cite[Theorem 8.2.2]{Schnabel1996}.
Thus the choice of the correction is not important; see the numerical experiments in Section \ref{sec:NUM}.

Following the update of $R$, a refined index function $\eta^{n,2}$ is computed.
$z^{n}$ (cf. \eqref{eqn5}), dependent solely on the scattering field $y^{s}_{d}$, does not require recomputation.
Thus the process begins with evaluating the second local dual function $\zeta^{n,2} = \mathcal{N}(y^{n}(u^{n,1}))^{*} z^{n}$, where $\mathcal{N}(y^{n}(u^{n,1}))^{*}$ replaces $\mathcal{N}(y^{n}(u^{n,0}))^{*}$ in \eqref{eqn18} to provide a more precise approximation.
Using the updated $\mathcal{R}$, we compute $\eta^{n,2} = \mathcal{R} \zeta^{n,2}$ and get an improved estimate $u^{n,2}=\mathcal{P}(\eta^{n,2})$ of  $u_{*}^{n}$.
The relative error of $y^{n}(u^{n,2})$ is then evaluated to determine whether it falls below the tolerance $\epsilon_{\text{tol}}$.
If the error exceeds $\epsilon_{\text{tol}}$, the process is repeated with $k=3$ and then augment the kernel $R$ with an additional low-rank term.
The refinement continues until a satisfactory estimate of $u_{*}^{n}$ is achieved.

\subsection{Robust implementation}\label{subsec:srs}
When forming the index function $\eta = \mathcal{R}\zeta$, practical challenges still arise due to the presence of noise, particularly over an extended time horizon.
We integrate three efficient strategies into the algorithm to enhance the numerical stability.

\begin{itemize}
\item \textbf{Mesh coarsening}:
We employ mesh coarsening to reduce memory requirements and mitigate the impact of noise.
Since $u$ and $\eta$ approximate piecewise-constant functions, and the operator $\mathcal{N}$ maps $L^\infty(0,T;L^\infty(\Omega))$ to $L^2(0,T;H^1(\Omega)')$, it follows that $u(\cdot,t)$, $\eta(\cdot,t)$ and $\zeta(\cdot,t)$ all constitute a finitely additive, absolutely continuous measure with respect to Lebesgue measure.
Thus, we compute $u$, $\eta$, $\zeta$ and the basis functions $\{m_j, n_j\}_{j=1}^K$ in $R$ using a coarse mesh.
In Section \ref{sec:NUM}, the finite element method is implemented with two separate meshes: a fine mesh $\mathscr{T}_{f}$ and a coarse mesh $\mathscr{T}_{c}$.
For each time segment, the backward heat equation \eqref{eqn5} is solved on the fine mesh $\mathscr{T}_{f}$ using piecewise linear elements.
Then, the local dual function $\zeta$ is computed on the coarse mesh $\mathscr{T}_{c}$ using piecewise constant elements via averaging.
This strategy effectively suppresses noise through averaging.

\item \textbf{Damping low-rank components}:
We incorporate a damping factor $\lambda \in (0,1)$ to the low-rank terms since the Gramian $\mathcal{G}[u] = \mathcal{H}[u]^{*}\mathcal{H}[u]$ depends on $u$.
The auxiliary pair $(\hat{u}^{n}, \hat{\zeta}^{n})$ may lose accuracy in later time segments $(N \delta t, (N + 1)\delta t]$ when $N \gg n$.
Thus, a damping factor $\lambda\in(0,1)$ is applied to the low-rank term at the end of each time segment: $m_{j} \leftarrow \lambda m_{j}$, so that the influence of outdated information diminishes over time.
It becomes negligible after several time segments, and we limit its total rank.
Moreover, at the first iteration within each time segment, the singular component $D(x,t)$ is rescaled by the factor ${\|u^{n,1}\|_{L^{1}(\Omega)}}/{\|D\hat{\zeta}^{n,1}\|_{L^{1}(\Omega)}}$ in order to account for temporal variation of  $\mathcal{G}[u]$.

\item \textbf{Dirichlet boundary condition for initial values}:
The third strategy computes the initial value for the next time segment using a Dirichlet boundary condition in place of the Neumann one by solving the following modified equation (instead of \eqref{eqn1}--\eqref{eqn:ibc}):
\begin{equation}\label{eqn2}
\left\{
\begin{aligned}
\partial_{t} y^{n} - \Delta y^{n} + N(y^{n})u^{n} &= f, & \text{in }& \Omega \times (n\delta t, (n+1)\delta t), \\
y^{n} &= y_{d}^{n}, & \text{on }& \Gamma \times (n\delta t, (n+1)\delta t), \\
y^{n} &= y^{n-1}, & \text{in }& \Omega \times \{n\delta t\}.
\end{aligned}
\right.
\end{equation}
The solution $y_{D}^{n}(u^{n})$  to \eqref{eqn2} has better accuracy than the solution to \eqref{eqn1}, $y_{D}^{n}(u^{n})$, particularly near the boundary $\Gamma$.
This gives a more accurate initial condition for the next time segment, thereby enhancing the long-term stability of the method.
\end{itemize}

\subsection{The algorithm for the IDSM}\label{subsec:aid}

Now we provide a detailed algorithm for the IDSM; see the pseudocode in Algorithm \ref{alg1}. We illustrate the IDSM on the model in \eqref{eqn6}.

\begin{algorithm}[hbt!]
\caption{The IDSM for the parabolic inverse problem}
\label{alg1}
\begin{algorithmic}[1]
\Require Boundary measurements $y_{d}$, time interval $[0, T]$, time step $\delta t$, tolerance $\epsilon_{\text{tol}}$, damping factor $\lambda$, and initial kernel $R(x,t;x',t')=D(x,t)\delta_{x,t}(x',t')$.

\State \textbf{Initialization}: Set the initial kernel $\mathcal{R}$.
\For{$n = 0, 1, 2, \ldots, \frac{T}{\delta t}-1$} \Comment{Iterate over time segments}
\State Solve the background equation \eqref{eqn1} with initial data $y_{D}^{n-1}(u^{n-1})$ to obtain $y_{\emptyset}^{n}=\mathcal{T}y^{n}(0)$.
\State Initialize the current segment solution: $y^{n} \leftarrow y^{n}(0)$.

\State Compute the scattering field: $y^{s,n}_{d} = y_{\emptyset}^{n} - y_{d}^{n}$.
\State Solve the adjoint problem \eqref{eqn5} with Neumann data $y^{s,n}_{d}$ to compute $z^{n}$.

\For{$k = 1, 2, 3, \ldots$} \Comment{Iterate until convergence}
\State Compute the $k$th local dual function: $\zeta^{n,k} = \mathcal{N}(y^{n})^{*} z^{n}$.
\State Project $\zeta^{n,k}$ onto the coarse mesh. 

\State Form the $k$th estimate: $u^{n,k} = \mathcal{P}(\mathcal{R}\zeta^{n,k})$..

\State Obtain $y^{n}=y^{n}(u^{n,k})$ by solving \eqref{eqn1} with $u^{n,k}$.

\State Compute the $k$th auxiliary scattering field: $\hat{y}^{s,n,k} = y_{\emptyset}^{n} - \mathcal{T}y^{n}$.
\State Compute $\hat{z}^{n,k}$ by solving problem \eqref{eqn5} with the boundary flux $\hat{y}^{s,n,k}$.
\State Compute the $k$th auxiliary local dual function $\hat{\zeta}^{n,k} = \mathcal{H}[u^{n,k}]^{*} \hat{y}^{s,n,k}$.
\State Compute the low-rank update to $\mathcal{R}$ using either DFP \eqref{eqn:DFP} or BFG \eqref{eqn:BFG}.
\If{$k=1$}
\State $D(x,t)=D(x,t){\|u^{n,1}\|_{L^{1}(\Omega)}}/{\|D\hat{\zeta}^{n,1}\|_{L^{1}(\Omega)}}$.
\EndIf
\If{$\|y^{n} - y_{d}^{n}\|_{L^{2}}/\|y_{d}^{n}\|_{L^{2}} \leq \epsilon_{\text{tol}}$}
\State \textbf{Break} the inner loop.
\EndIf
\EndFor

\State Compute the final local dual function $\zeta^{n} = \mathcal{N}(y^{n})^{*} z^{n}$ and project onto the coarse mesh $mathscr{T}_{c}$.
\State Obtain the estimated inhomogeneity: $u^{n} = \mathcal{P}(\mathcal{R}\zeta^{n})$.
\State Solve the direct problem \eqref{eqn2} with $u^{n}$ to get $y_{D}^{n}(u^{n})$.

\State Damp low-rank terms: $m_{j} \leftarrow \lambda m_{j}$ for all $j>0$.
\EndFor

\State \Return The reconstructed inhomogeneity $\{u^{n}\}_{0}^{T/\delta t-1}$ and solutions $\{y_{D}^{n}(u^{n})\}_{0}^{T/\delta t-1}$.
\end{algorithmic}
\end{algorithm}

\begin{example}
This example illustrates the IDSM on recovering both conductivity and potential inclusions, cf. \eqref{eqn6}, from one pair of Cauchy data $y_{d}$ generated by the triple $(f, g, h)$. We illustrate the algorithm in the first time segment step by step.

\begin{itemize}
\item \textbf{Step 3-6: Compute the background solution and scattering field.}

First, we obtain $y_{\emptyset}^0 = \mathcal{T} y^0(0)$ by solving the background model:
\begin{equation*}
\partial_{t} y^{0}(0) - \Delta y^{0}(0) = f, \quad \text{in }\Omega \times (0, \delta t),
\end{equation*}
equipped with the initial condition $h$ and boundary condition $g$, cf. \eqref{eqn:ibc}.
This yields the scattering field $y_{d}^{s,0} = y_{\emptyset}^0 - y_{d}^0$.
Next, we solve problem \eqref{eqn5} with the boundary flux $y_{d}^{s,0}$:
\begin{equation*}
\left\{
\begin{aligned}
\partial_{t} z^{0} + \Delta z^{0} &= 0, & \text{in }& \Omega \times (0, \delta t), \\
{\partial_n} z^{0} &= y_{d}^{s,0}, & \text{on }& \Gamma \times (0, \delta t), \\
z^{0} &= 0, & \text{in }& \Omega \times \{\delta t\}.
\end{aligned}\right.
\end{equation*}
By Lemma \ref{lemma1}, $z^0 = (\mathcal{T}\mathcal{F})^* y_{d}^{s,0}$.
$z^0$ is independent of the inclusion $u$.

\item \textbf{Step 7-22: Iterative refinement}

We compute the first local dual function $\zeta^{0,1} = (\zeta_c^{0,1}, \zeta_p^{0,1})^\top$  using \eqref{eqn15}:
\begin{equation*}
\zeta_c^{0,1} = \nabla z^0 \cdot \nabla y^0(0)\quad\text{and}\quad
\zeta_p^{0,1} =  z^0 y^0(0).
\end{equation*}
Then we project the local dual function $\zeta^{0,1}$  onto a coarse mesh via averaging, and compute
the index function $\eta^{0,1} = (\eta_c^{0,1}, \eta_p^{0,1})^\top$ by
\begin{equation}\label{eqn19}
\eta_c^{0,1}(x,t) = D_c(x,t) \zeta_c^{0,1}(x,t)\quad\text{and}\quad
\eta_p^{0,1}(x,t) = D_p(x,t) \zeta_p^{0,1}(x,t),
\end{equation}
where $D_c$ and $D_p$ denote the initial kernel $R$.
We obtain the first estimate $u^{0,1} = \mathcal{P}(\eta^{0,1})$ of $u_*^0$ by pointwise projection:
\begin{equation*}
\hat{u}_c^{0,1} = \max(\eta_c^{0,1}, -1 + \varepsilon_0)\quad\text{and}\quad
\hat{u}_p^{0,1} = \max(\eta_v^{0,1}, 0.0).
\end{equation*}
Then, we compute $y^0(u^{0,1})$ and the relative error $e = {\|y^0(u^{0,1}) - y_{d}^0\|_{L^2}}/{\|y_{d}^0\|_{L^2}}$.
If $ e > \epsilon_{\text{tol}}$, the estimate $u^{0,1}$ is insufficiently accurate, and an auxiliary pair $(u^{0,1}, \hat{\zeta}^{0,1})$ is constructed.
To get the auxiliary dual $\hat{\zeta}^{0,1}$, we solve the backward heat equation with $\hat{y}^{s,0} = y_{\emptyset}^0 - \mathcal{T} y^0(u^{0,1})$ and obtain $\hat{z}^{0,1}$.
Then we compute $\hat{\zeta}^{0,1} = (\hat{\zeta}_c^{0,1}, \hat{\zeta}_p^{0,1})^\top$ by
\begin{equation*}
\hat{\zeta}_c^{0,1} = \nabla \hat{z}^{0,1} \cdot \nabla y^0(u^{0,1})\quad\text{and}\quad
\hat{\zeta}_p^{0,1} = \hat{z}^{0,1} y^0(u^{0,1}).
\end{equation*}
We add a new low-rank term to the  kernel $R$ using either \eqref{eqn:DFP} or \eqref{eqn:BFG}, where $\hat{\eta}^{0,1}$ is computed via its definition in \eqref{eqn20}:
\begin{align*}
\hat{\eta}_{c}^{0,1}(x,t) &= \begin{cases}
\hat{u}_{c}^{0,1}(x,t), & \text{if } \hat{u}_{c}^{0,1}(x,t) > -1 + \varepsilon_{0}, \\
\min\{\hat{\zeta}_{c}^{0,1}(x,t), -1 + \varepsilon_{0}\}, & \text{if } \hat{u}_{c}^{0,1}(x,t) = -1 + \varepsilon_{0},
\end{cases} \\
\hat{\eta}_p^{0,1}(x,t) &= \begin{cases}
\hat{u}_p^{0,1}(x,t), & \text{if } \hat{u}_p^{0,1}(x,t) > 0, \\
\min\{\hat{\zeta}_p^{0,1}(x,t), 0\}, & \text{if } \hat{u}_p^{0,1}(x,t) = 0.
\end{cases}
\end{align*}
Then we compute the second local dual function $\zeta^{0,2}$ in the time segment by
\begin{equation*}
\zeta_c^{0,2} = \nabla z^0 \cdot \nabla y^0(u^{0,1})\quad\text{and}\quad
\zeta_p^{0,2} = z^0 y^0(u^{0,1}).
\end{equation*}
Note that $z^0$ is already computed. 
This enables computing $\eta^{0,2}$.
With the updated kernel, the action of $R$ differs from \eqref{eqn19}.
For example, the DFP correction reads
\begin{equation*}
\begin{aligned}
\eta_c^{0,2}(x,t) &= D_c(x,t) \zeta_c^{0,2}(x,t) + c_1 \hat{\eta}_c^{0,1} + c_2 D_c(x,t) \hat{\zeta}_c^{0,1}(x,t), \\
\eta_p^{0,2}(x,t) &= D_p(x,t) \zeta_v^{0,2}(x,t) + c_1 \hat{\eta}_p^{0,1} + c_2 D_p(x,t) \hat{\zeta}_p^{0,1}(x,t),
\end{aligned}
\end{equation*}
with
$c_1 = \frac{(\hat{\eta}^{0,1}, \zeta^{0,2})}{(\hat{\eta}^{0,1}, \hat{\zeta}^{0,1})}$ and $
c_2 = \frac{(D \hat{\zeta}^{0,1}, \zeta^{0,2})}{(D \hat{\zeta}^{0,1}, \hat{\zeta}^{0,1})}$.
The positivity constraint on $u^{0,2}$ is enforced by the operator $\mathcal{P}$, and the potential $y^{0}(u^{0,2})$ is obtained by solving \eqref{eqn1}.
The relative error $e$ is then recomputed, and the iteration continues until the relative errors falls below  $\epsilon_{\text{tol}}$.

\item \textbf{Step 23-26: Final estimation and transition to the next segment}

Once the relative error satisfies ${\|y^0(u^{0,M}) - y_{d}^0\|_{L^2}}/{\|y_{d}^0\|_{L^2}} \leq \epsilon_{\text{tol}}$ for {the $M$th inner iteration}, we compute the final estimate of $u_*^0$ using $\zeta^0$:
\begin{equation*}
\zeta_c^0 = \nabla z^0 \cdot \nabla y^0(u^{0,M})\quad\text{and}\quad
\zeta_p^0 = z^0 y^0(u^{0,M}).
\end{equation*}
The estimate of the inclusion is $u^0 = \mathcal{P}(\mathcal{R} \zeta^0)$, and we obtain the initial value for the next time segment by solving the following problem with Dirichlet boundary condition:
\begin{equation*}
\left\{
\begin{aligned}
\partial_{t} y_{D}^{0}(u^{0}) - \nabla \cdot \left((1 + u_{c}^{0}) \nabla y_{D}^{0}(u^{0})\right) + u_{p}^{0} y_{D}^{0}(u^{0}) &= f, & \text{in }& \Omega \times (0, \delta t), \\
{\partial_n} y_{D}^{0}(u^{0}) &= g, & \text{on }& \Gamma \times (0, \delta t), \\
y_{D}^{0}(u^{0}) &= h, & \text{in }& \Omega \times \{0\}.
\end{aligned}
\right.
\end{equation*}
Then a damping factor $\lambda$ is applied to the low-rank terms of $R$.
\end{itemize}

In subsequent time segments, the procedure is analogous, with $h$ replaced by $y_{D}^0(u^0)(\cdot, n\delta t)$ as the initial value.
Unlike the first iteration of the first segment, the kernel $R$ now includes both the delta distribution and the low-rank structure from the earlier segment.
\end{example}

\section{Numerical experiments} \label{sec:NUM}
\newcommand{\figlen}{0.16\textwidth}

Now we present numerical examples to illustrate the efficiency and robustness of the IDSM in Algorithm \ref{alg1}.
Example \ref{exam:1} involves two disconnected inclusions that first merge and then split.
Examples \ref{exam:2} and \ref{exam:3} consider the composite model \eqref{eqn6} and nonlinear model \eqref{eqn13}, respectively.
Examples \ref{exam:4} and \ref{exam:5} consider fading inclusions and diminishing inclusions to assess its ability to track dynamic behavior beyond spatial relocation.
The codes for reproducing the numerical results will be made available at the following github link \url{https://github.com/RaulWangfr/IDSM-parabolic.git}.

All the experiments are conducted on the unit disk $\Omega = \{(x_{1}, x_{2})|\,x_{1}^{2}+x_{2}^{2}< 1\} \subset \mathbb{R}^2$ and $T=10$.
We employ the Crank-Nicolson scheme and linear finite elements for time and spatial discretization, respectively.
The reference solution $y(u_*)$ is obtained by solving \eqref{eqn1}--\eqref{eqn:ibc} with a step size $\Delta t = 0.01$ on a fine mesh with 13870 triangular elements.
We generate the noisy data $y_d^\delta$ by adding noise into the exact data $y(u_{*})$ pointwise as
\begin{equation*}
y_d^\delta(x,t) = y(u_*)(x,t)\left(1 + \varepsilon \delta(x,t)\right),
\end{equation*}
where $\delta(x,t) \sim \mathcal{U}(-1,1)$ is uniformly distributed and $\varepsilon$ is the relative noise level.
The measurements are recorded at discrete times $\{0.01n\}_{0}^{1000}$, and estimated with linear interpolation at intermediate times.
All the examples employ the following source
\begin{align*}
f &= 25\sin\left(\tfrac{t\pi}{4}\right)\sin(3x)\cos(4y), \quad h = 3.0 + \sin(3x)\cos(4y),\\
g &= \cos\left(\tfrac{t\pi}{6}\right)\left(3\cos(3x)\cos(4y)\vec{n}_{x} - 4\sin(3x)\sin(4y)\vec{n}_{y}\right),
\end{align*}
Algorithm \ref{alg1} employs time segments of length $\delta t = 0.1$ with a fine time step $\Delta t = 0.0125$.
The fine mesh $\mathscr{T}_f$ and coarse mesh $\mathscr{T}_c$ comprise 7002 and 1120 triangular elements, respectively.
The kernel $R$ is initialized using a delta distribution with the boundary distance kernel $D(x,t) = d(x,\Gamma)^{1.4}\chi_{\Omega_{\epsilon}}(x)$ with $\Omega_{\epsilon}=\{x\in \Omega:\,d(x,\Gamma)\geq \epsilon\}$.
Unless otherwise specified, we set the noise level $\varepsilon$ to 5\%, the damping parameter $\lambda$ to 0.6 and the tolerance $\epsilon_{\text{tol}}$ to 0.08 (for the low-rank update).
While both DFP \eqref{eqn:DFP} and BFG \eqref{eqn:BFG} corrections are implemented, we present results using only one method for each example.
The reconstructions are visualized via heatmaps of the normalized inhomogeneity ${u^n}/{\|u^n\|_{L^\infty}}$, in which the presence of inclusions is represented by the color red, whereas the background is indicated by the color blue, and the boundaries of the ground truth inclusions $\omega_{j}$ are indicated by black contours.
Table \ref{table1} presents the overall computational expense, corresponding to $T/\delta t=100$ time segments.

\begin{table}[hbt!]
\centering
\begin{threeparttable}
\caption{The number of PDE solves required for the examples.
Each time segment involves solving \eqref{eqn5} and \eqref{eqn1} multiple times.
The reported values represent the average number of PDE solves computed over all time segments.}\label{table1}

\begin{tabular}{l|c c|c|c|c|c|c|}
\toprule
\textbf{Example} & \multicolumn{2}{c|}{1} & {2} & {3} & {4} & {5} \\
\cline{2-3}
& $\varepsilon = 5\%$ & $\varepsilon = 10\%$& &  & & \\
\hline
Forward background solves &1&1 & {1} & {1} & 1 & 1\\
Backward adjoint solves &1.01&1.11  & {1.01} & {2.73} & 1.02 & 1.01\\
Forward inhomogeneous solves &1.01&1.11  & {1.01} & {2.73} & 1.02 & 1.01 \\
Dirichlet verification solves &1&1  & {1} & {1} & 1 & 1\\
\hline
\textbf{Total solves} &4.02&4.22 & {4.02} & {7.46} & 4.04 & 4.02\\
\bottomrule
\end{tabular}
\end{threeparttable}
\end{table}

\subsection{Example 1: merging and splitting of inclusions}\label{exam:1}
This example investigates the reconstruction of two moving inclusions that merge into a single entity over time and then split, at two noise levels, $\varepsilon=5\%$ and $\varepsilon=10\%$, and the tolerance $\epsilon_{\text{tol}}$ is set to $10\%$.
The governing model is given by \eqref{eqn11}.
Inside the inhomogeneity the conductivity is $0.1$, and the pointwise projection is $\mathcal{P}(\eta)=\max\{\min\{\eta,0.0\},-0.99\}$.
Both inclusions are modeled as circular regions with a fixed radius of $0.2$.
Their center trajectories are given by
\begin{align*}
\vec{\gamma}_{1}(t) &= \left(0.6\cos\left(\tfrac{t\pi}{6}\right), -0.7\sin\left(\tfrac{t\pi}{6}\right)\right)\chi_{[0,3)}(t)+
\left(-0.6\cos\left(\tfrac{t\pi}{6}\right), -0.7\sin\left(\tfrac{t\pi}{6}\right)\right)\chi_{[3,10]}(t), \\
\vec{\gamma}_{2}(t) &= (-0.6\cos(\tfrac{t\pi}{6}), -0.7\sin(\tfrac{t\pi}{6}))\chi_{[0,6)}(t) + 
(-0.6\cos(\tfrac{(12 - t)\pi}{6}), -0.7\sin(\tfrac{(12 - t)\pi}{6}))\chi_{[6,10]}(t).
\end{align*}
The reconstructions by the IDSM using the BFG correction scheme are shown in Fig. \ref{fig1}.

The IDSM can reliably  track the dynamically merging and splitting inclusions for both noise levels.
At 5\% noise, the reconstructions (top two rows) closely match the exact trajectories and capture well topological changes. At 10\% noise, the accuracy (bottom two rows) degrades slightly, but the IDSM can still reliably identify the inclusion locations. The computational cost is low in both cases,  cf. Table \ref{table1}, showing the efficiency of the IDSM.

\begin{figure}[hbt!]
\centering
\begin{tabular}{ccccc}
\includegraphics[width = \figlen, trim = {0.3cm 0.1cm 0.3cm 0.1cm}, clip]{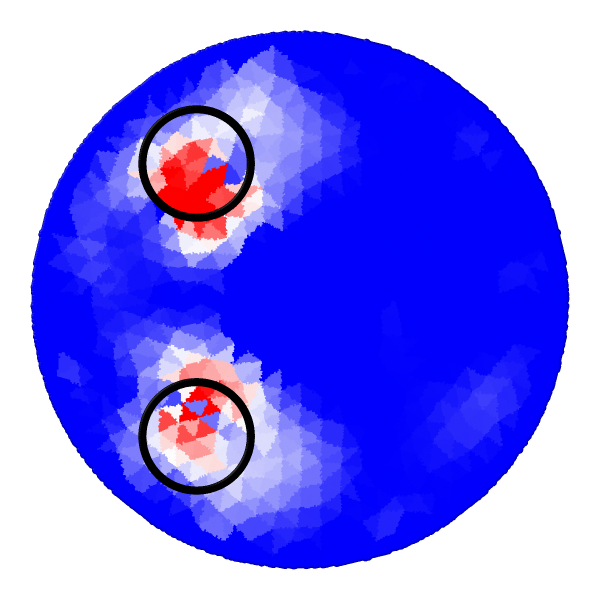}&
\includegraphics[width = \figlen, trim = {0.3cm 0.1cm 0.3cm 0.1cm}, clip]{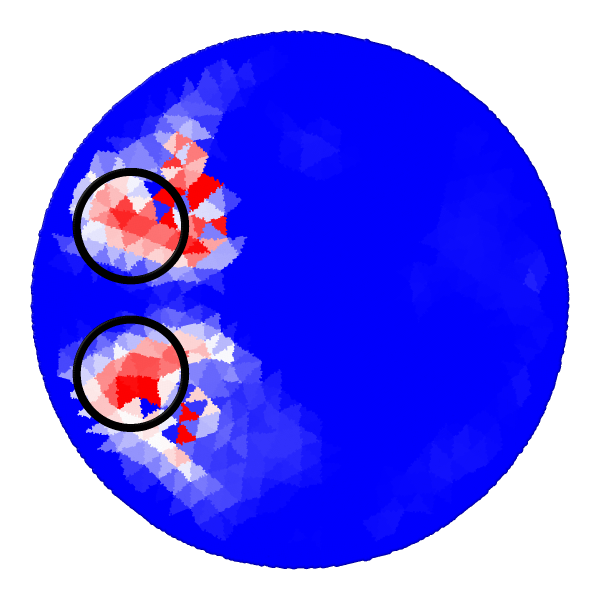}&
\includegraphics[width = \figlen, trim = {0.3cm 0.1cm 0.3cm 0.1cm}, clip]{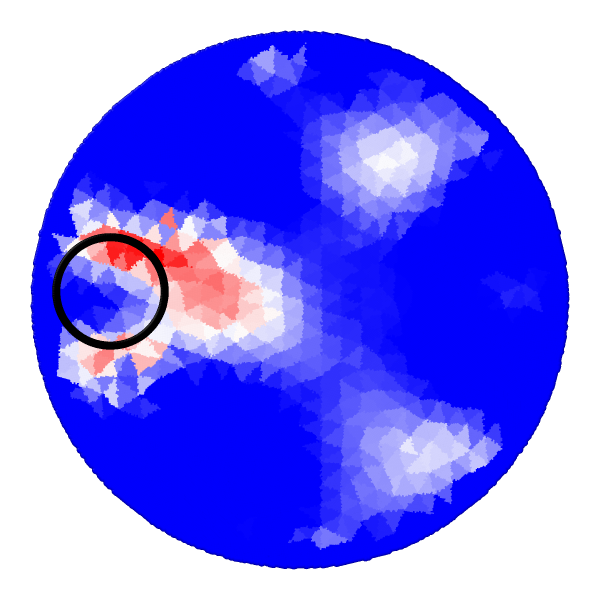}&
\includegraphics[width = \figlen, trim = {0.3cm 0.1cm 0.3cm 0.1cm}, clip]{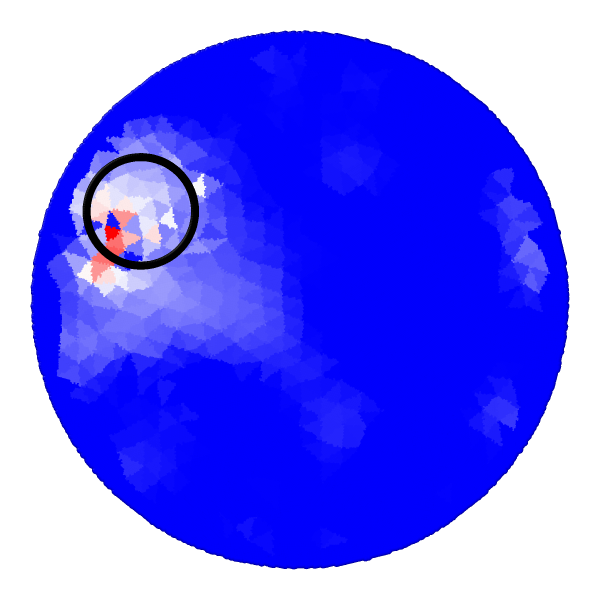}&
\includegraphics[width = \figlen, trim = {0.3cm 0.1cm 0.3cm 0.1cm}, clip]{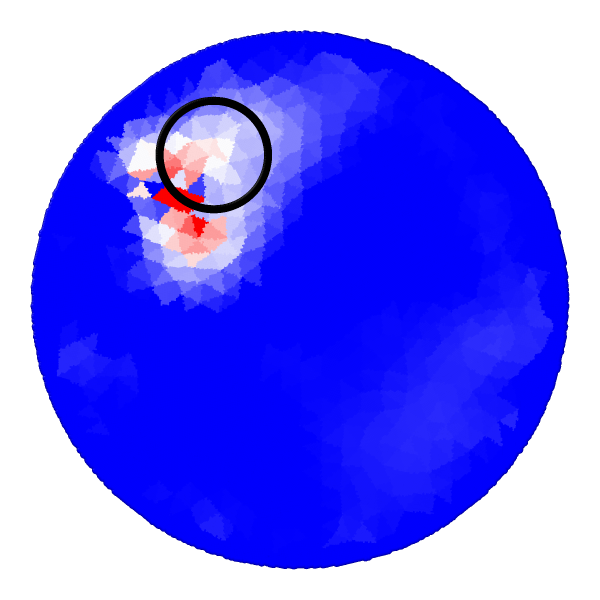}\\
$t=1$ & $t=2$ & $t=3$ & $t=4$ & $t=5$\\
\includegraphics[width = \figlen, trim = {0.3cm 0.1cm 0.3cm 0.1cm}, clip]{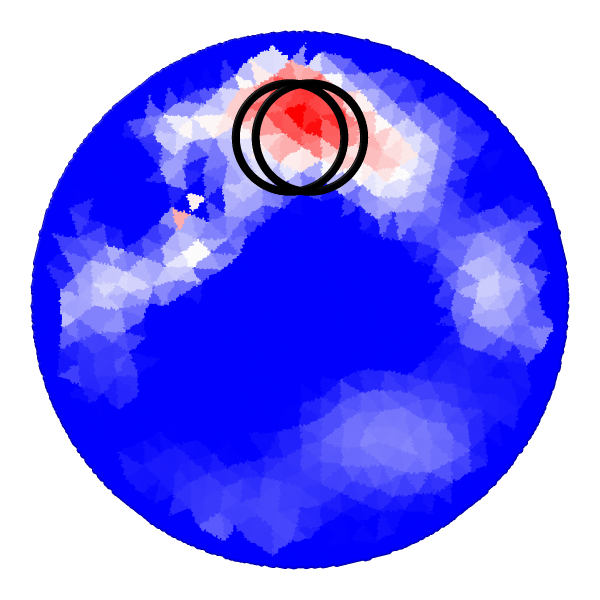}&
\includegraphics[width = \figlen, trim = {0.3cm 0.1cm 0.3cm 0.1cm}, clip]{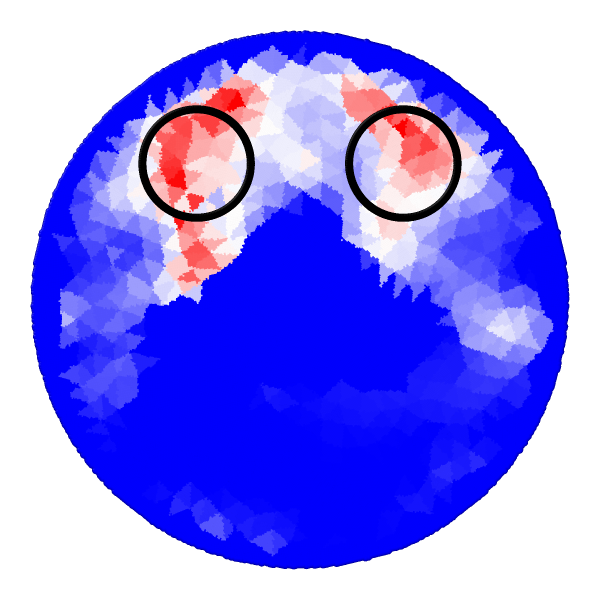}&
\includegraphics[width = \figlen, trim = {0.3cm 0.1cm 0.3cm 0.1cm}, clip]{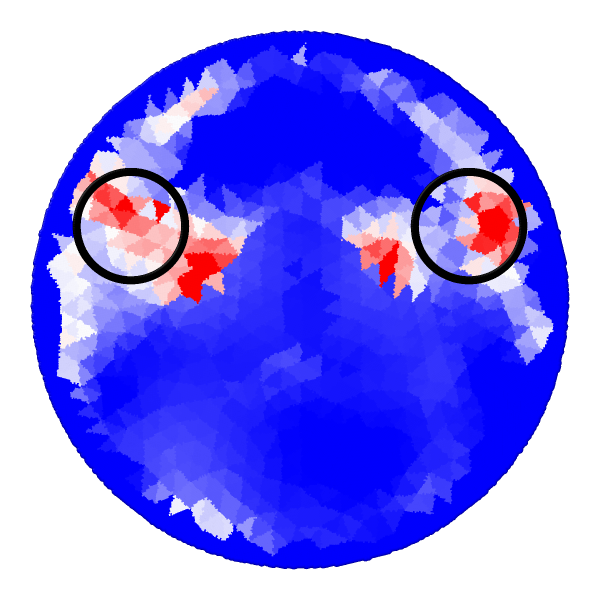}&
\includegraphics[width = \figlen, trim = {0.3cm 0.1cm 0.3cm 0.1cm}, clip]{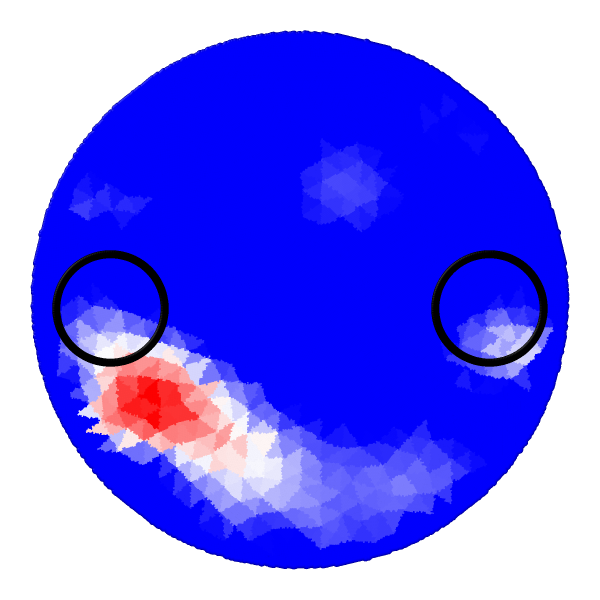}&
\includegraphics[width = \figlen, trim = {0.3cm 0.1cm 0.3cm 0.1cm}, clip]{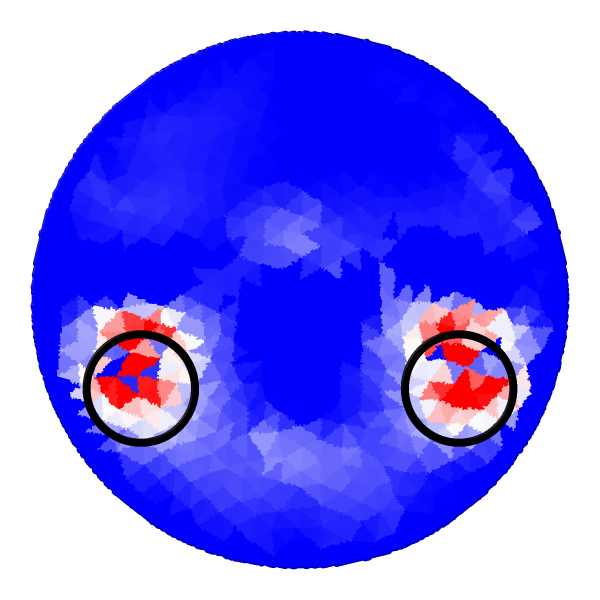}\\
$t=6$ & $t=7$ & $t=8$ & $t=9$ & $t=10$\\
\includegraphics[width = \figlen, trim = {0.3cm 0.1cm 0.3cm 0.1cm}, clip]{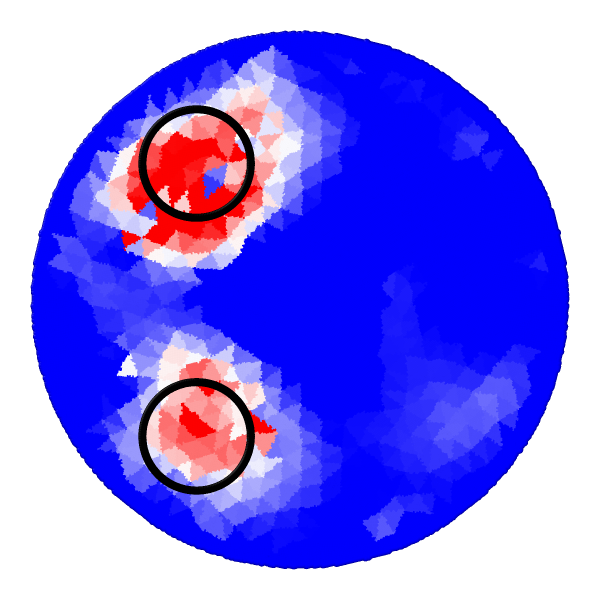}&
\includegraphics[width = \figlen, trim = {0.3cm 0.1cm 0.3cm 0.1cm}, clip]{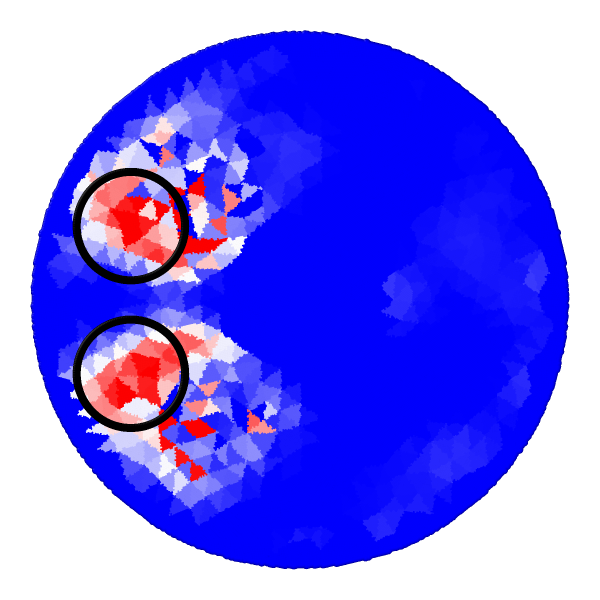}&
\includegraphics[width = \figlen, trim = {0.3cm 0.1cm 0.3cm 0.1cm}, clip]{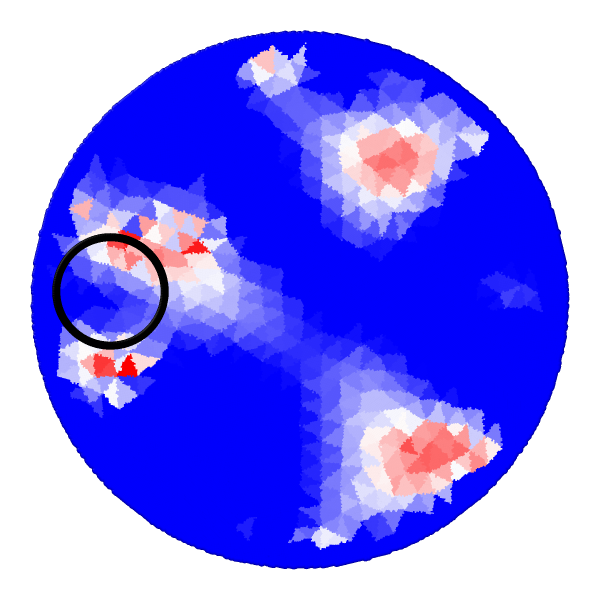}&
\includegraphics[width = \figlen, trim = {0.3cm 0.1cm 0.3cm 0.1cm}, clip]{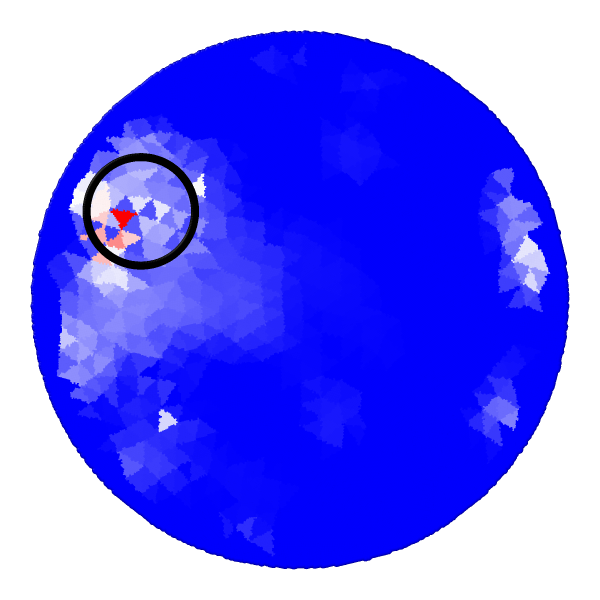}&
\includegraphics[width = \figlen, trim = {0.3cm 0.1cm 0.3cm 0.1cm}, clip]{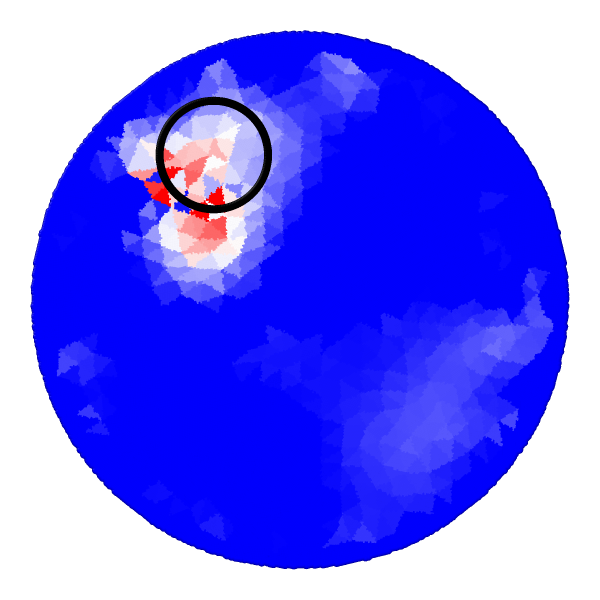}\\
$t=1$ & $t=2$ & $t=3$ & $t=4$ & $t=5$\\
\includegraphics[width = \figlen, trim = {0.3cm 0.1cm 0.3cm 0.1cm}, clip]{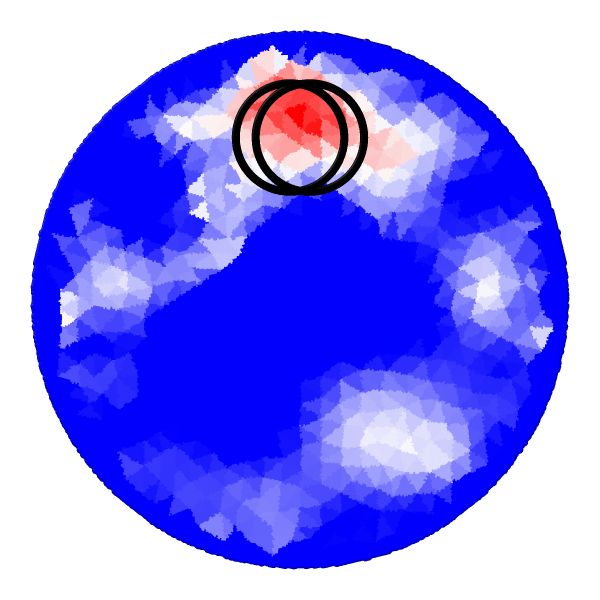}&
\includegraphics[width = \figlen, trim = {0.3cm 0.1cm 0.3cm 0.1cm}, clip]{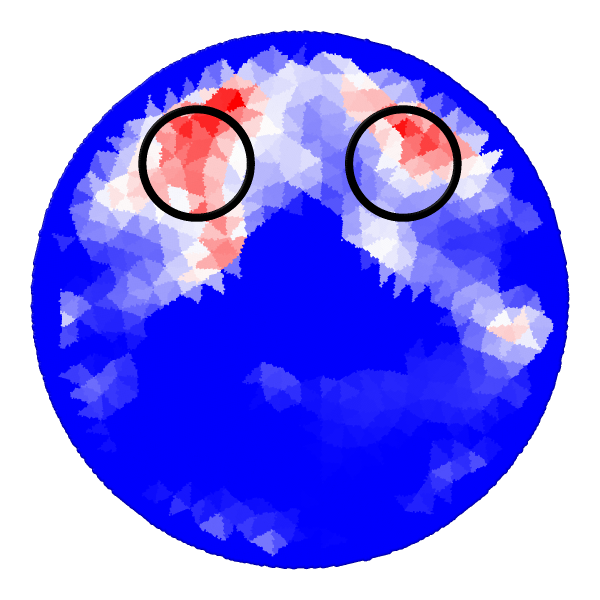}&
\includegraphics[width = \figlen, trim = {0.3cm 0.1cm 0.3cm 0.1cm}, clip]{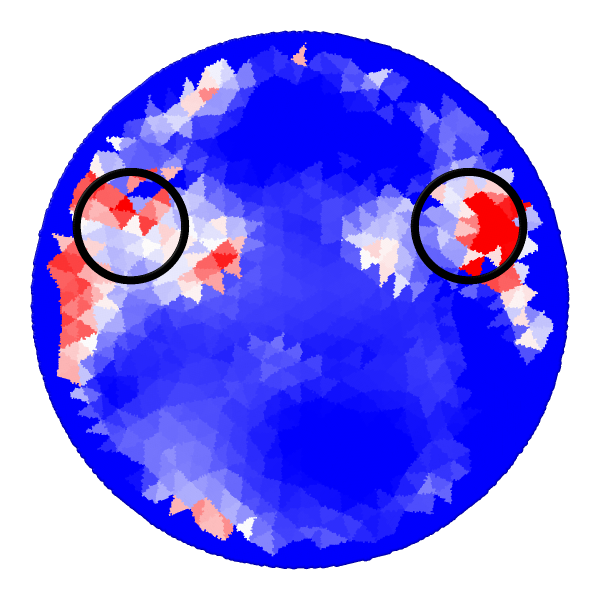}&
\includegraphics[width = \figlen, trim = {0.3cm 0.1cm 0.3cm 0.1cm}, clip]{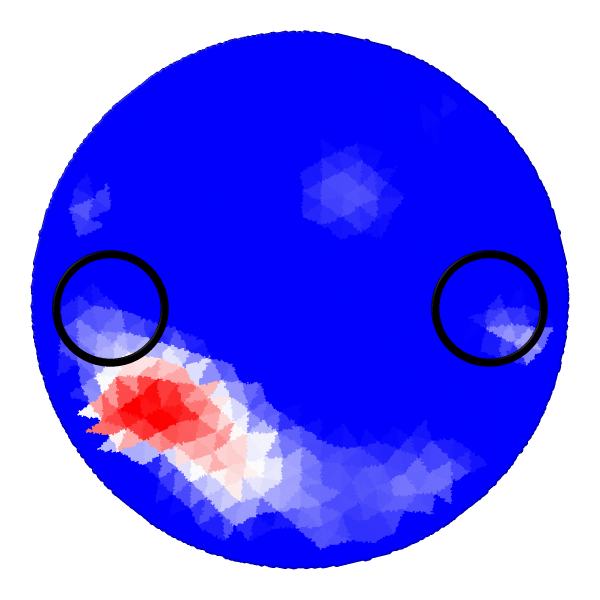}&
\includegraphics[width = \figlen, trim = {0.3cm 0.1cm 0.3cm 0.1cm}, clip]{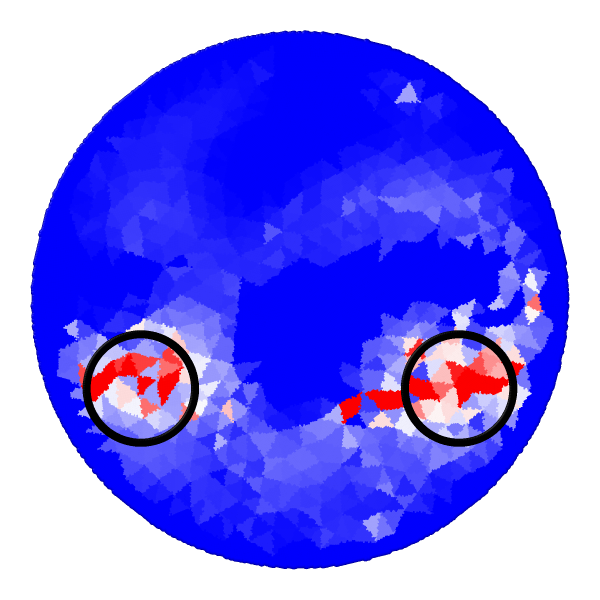}\\
$t=6$ & $t=7$ & $t=8$ & $t=9$ & $t=10$
\end{tabular}
\caption{
Numerical results for Example \ref{exam:1}, where two moving inclusions merge into one and then split.
Rows 1 and 2 are for $\varepsilon = 5\%$, while rows 3 and 4 are for $\varepsilon = 10\%$.
\label{fig1}}
\end{figure}

\subsection{Example 2: mixed-type inhomogeneities}\label{exam:2}
This example involves three moving inclusions with different types of inhomogeneity in the model \eqref{eqn6}: two conductivity inclusions and one potential inclusion.
We set $u_p = 15$ and $u_c = -0.9$: inside the conductivity inclusions, the conductivity is $0.1$, and inside the potential inclusion, the potential is $15$.
In the background, the conductivity and potential are $1$ and $0$, respectively. All three inclusions are circular with a radius of $0.2$, and their center trajectories are given by
\begin{align*}
\vec{\gamma}_{c1}(t) &= \left(0.65\cos\left(\tfrac{t\pi}{8} - \tfrac{7\pi}{6}\right), 0.65\sin\left(\tfrac{t\pi}{8} - \tfrac{7\pi}{6}\right)\right), \\
\vec{\gamma}_{c2}(t) &= \left(0.6\cos\left(\tfrac{t\pi}{8} - \tfrac{\pi}{3}\right), 0.7\sin\left(\tfrac{t\pi}{8} - \tfrac{\pi}{3}\right)\right), \\
\vec{\gamma}_{p}(t) &= \left(0.7\cos\left(\tfrac{t\pi}{8} - \tfrac{\pi}{3}\right), 0.5\sin\left(\tfrac{t\pi}{8} - \tfrac{\pi}{3}\right)\right).
\end{align*}
These trajectories are chosen to test the ability of the IDSM to track dynamic inclusions with overlapping.
We take
$\mathcal{P}(\eta_c)=\max\{\min\{\eta_{c},0.0\},-0.99\}$ and $\mathcal{P}(\eta_{p})
\min\{\max\{\eta_{p},0.0\},30\}$. The results by the IDSM using the DFP correction scheme are shown in Fig. \ref{fig2}.

The proposed IDSM successfully distinguishes the spatial and temporal distributions of the mixed inhomogeneities, cf. Fig. \ref{fig2}.
It can accurately localize the conductivity and potential inclusions even in the presence of near-overlap.
This shows the robustness of the IDSM in handling inhomogeneities of distinct physical origins.

\begin{figure}[hbt!]
\centering
\begin{tabular}{ccccc}
\includegraphics[width = \figlen, trim = {0.3cm 0.1cm 0.3cm 0.1cm}, clip]{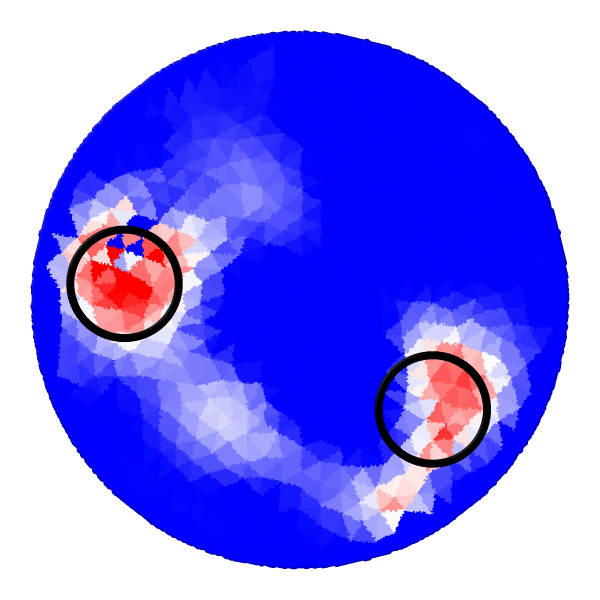}&
\includegraphics[width = \figlen, trim = {0.3cm 0.1cm 0.3cm 0.1cm}, clip]{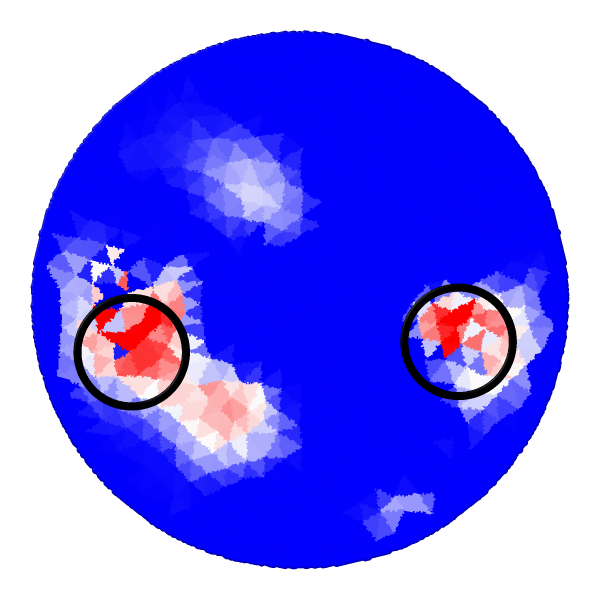}&
\includegraphics[width = \figlen, trim = {0.3cm 0.1cm 0.3cm 0.1cm}, clip]{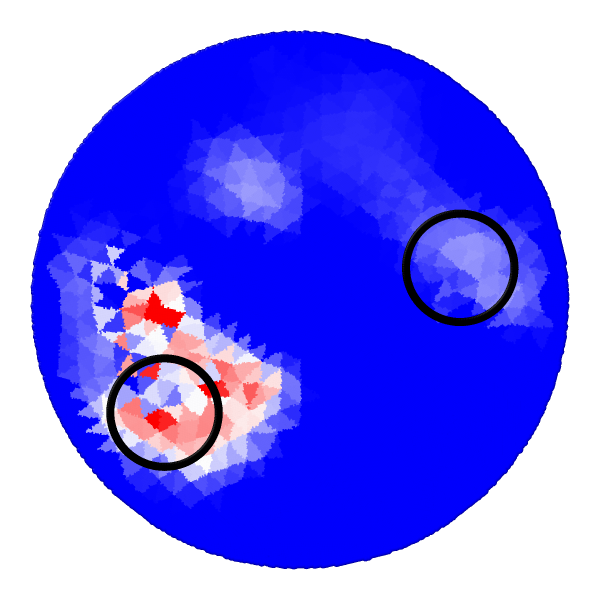}&
\includegraphics[width = \figlen, trim = {0.3cm 0.1cm 0.3cm 0.1cm}, clip]{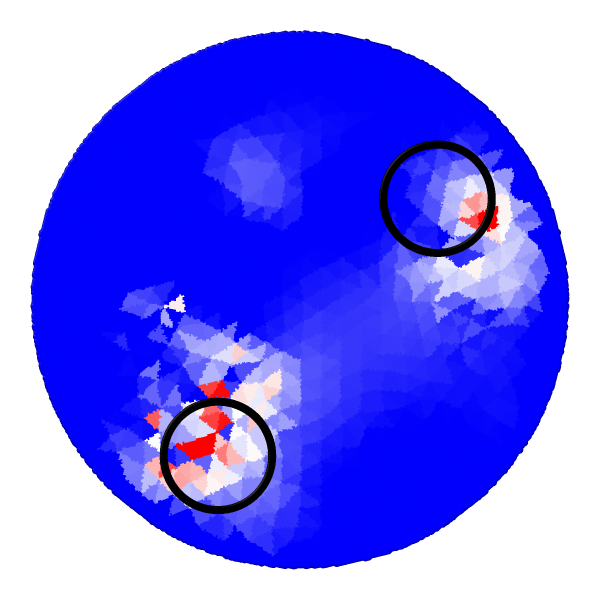}&
\includegraphics[width = \figlen, trim = {0.3cm 0.1cm 0.3cm 0.1cm}, clip]{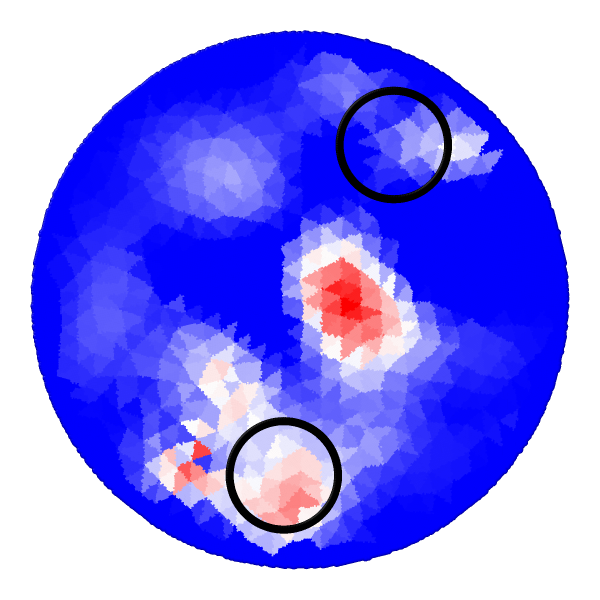}\\
$t=1$ & $t=2$ & $t=3$ & $t=4$ & $t=5$\\
\includegraphics[width = \figlen, trim = {0.3cm 0.1cm 0.3cm 0.1cm}, clip]{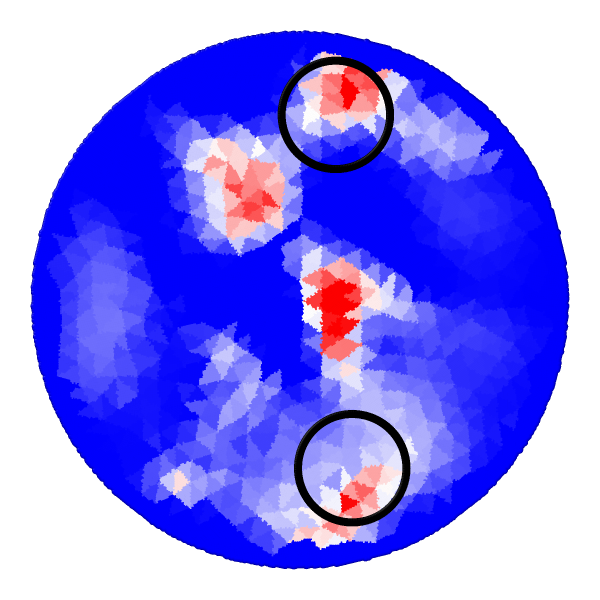}&
\includegraphics[width = \figlen, trim = {0.3cm 0.1cm 0.3cm 0.1cm}, clip]{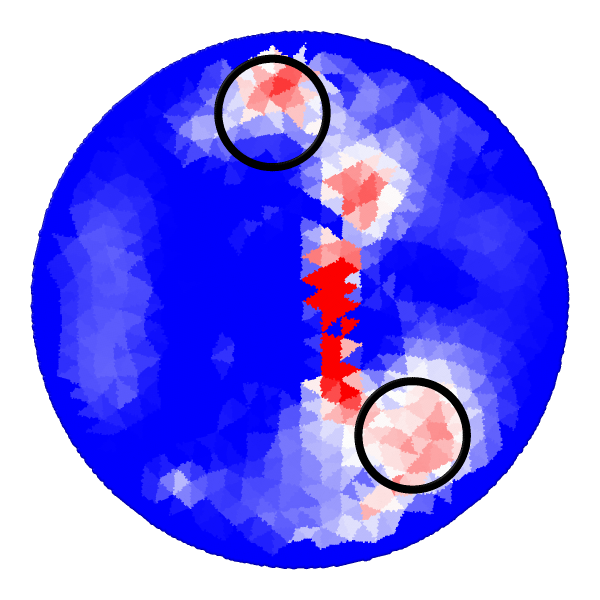}&
\includegraphics[width = \figlen, trim = {0.3cm 0.1cm 0.3cm 0.1cm}, clip]{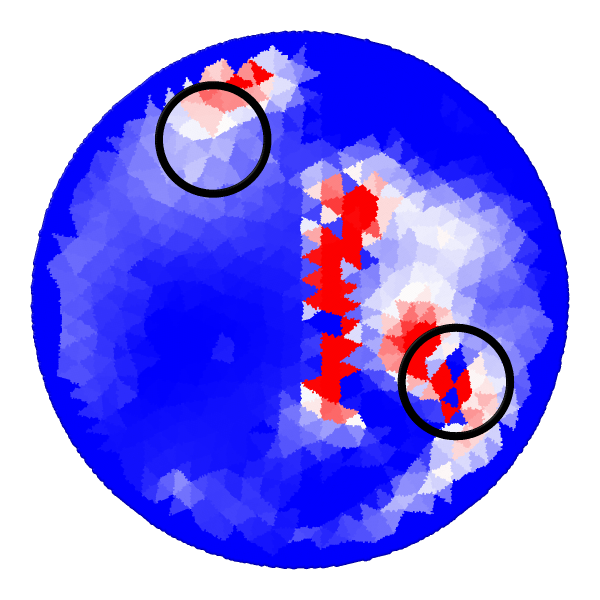}&
\includegraphics[width = \figlen, trim = {0.3cm 0.1cm 0.3cm 0.1cm}, clip]{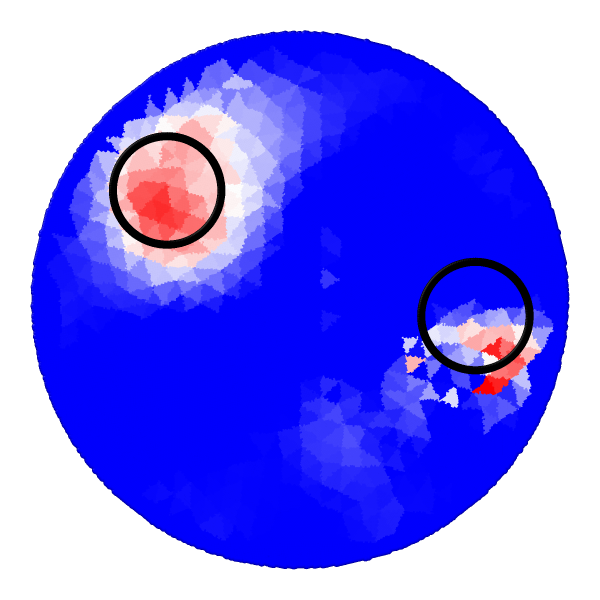}&
\includegraphics[width = \figlen, trim = {0.3cm 0.1cm 0.3cm 0.1cm}, clip]{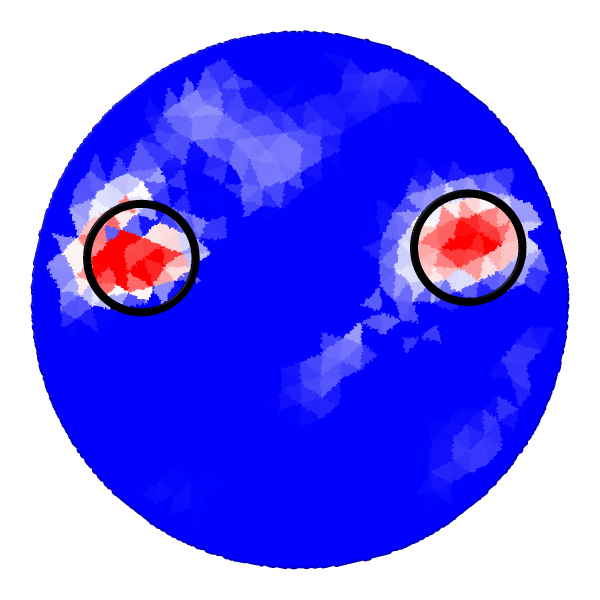}\\
$t=6$ & $t=7$ & $t=8$ & $t=9$ & $t=10$\\
\includegraphics[width = \figlen, trim = {0.3cm 0.1cm 0.3cm 0.1cm}, clip]{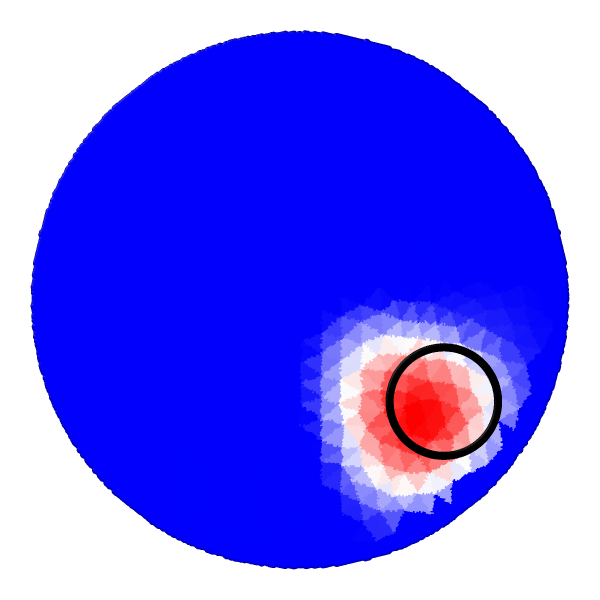}&
\includegraphics[width = \figlen, trim = {0.3cm 0.1cm 0.3cm 0.1cm}, clip]{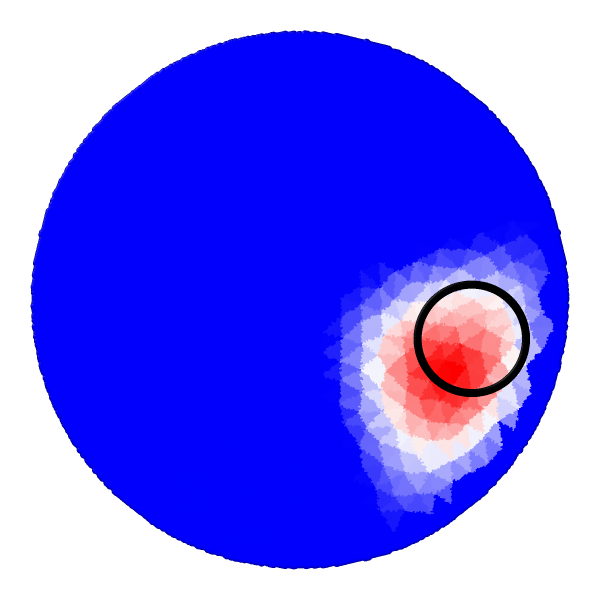}&
\includegraphics[width = \figlen, trim = {0.3cm 0.1cm 0.3cm 0.1cm}, clip]{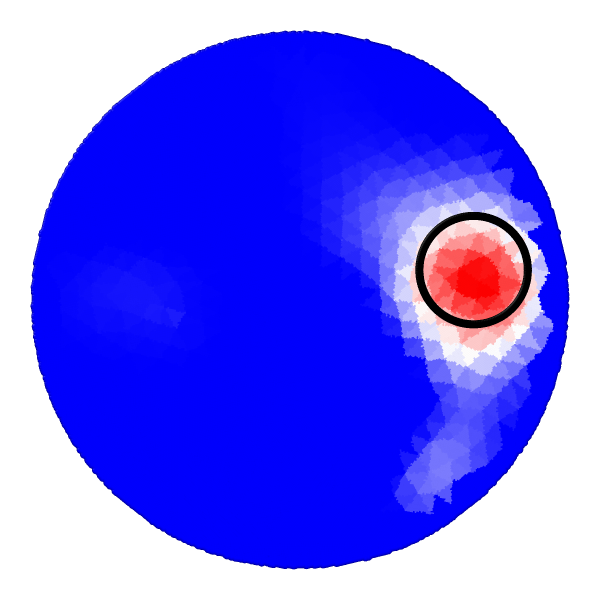}&
\includegraphics[width = \figlen, trim = {0.3cm 0.1cm 0.3cm 0.1cm}, clip]{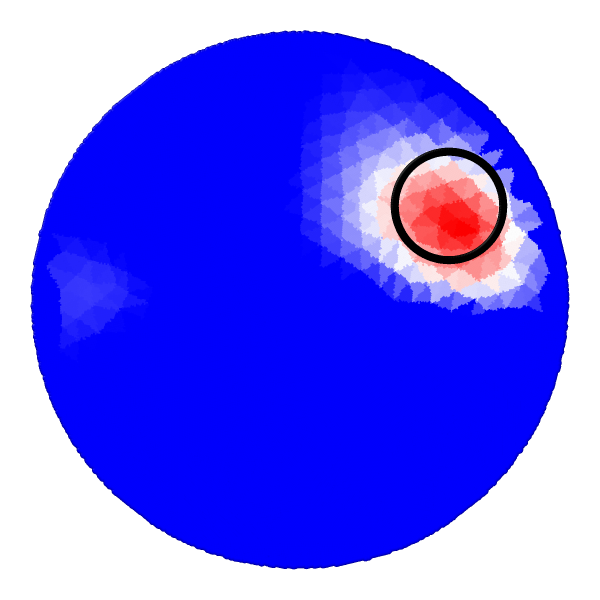}&
\includegraphics[width = \figlen, trim = {0.3cm 0.1cm 0.3cm 0.1cm}, clip]{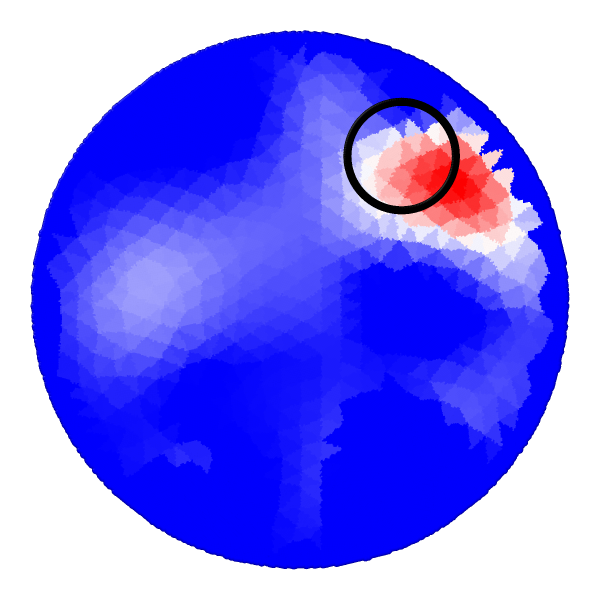}\\
$t=1$ & $t=2$ & $t=3$ & $t=4$ & $t=5$\\
\includegraphics[width = \figlen, trim = {0.3cm 0.1cm 0.3cm 0.1cm}, clip]{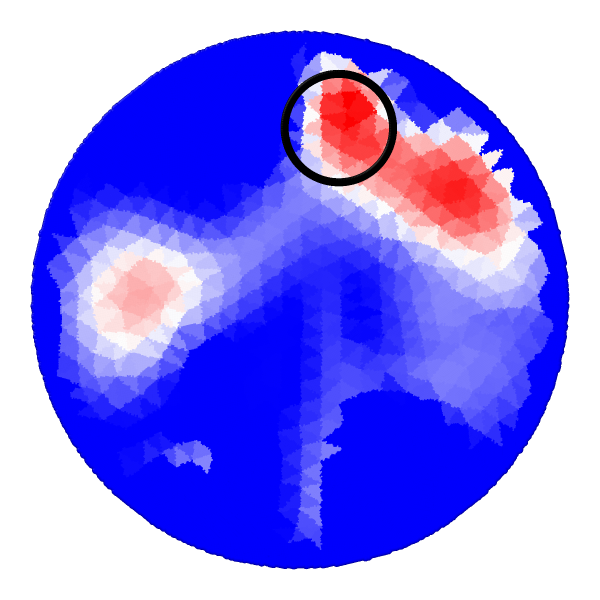}&
\includegraphics[width = \figlen, trim = {0.3cm 0.1cm 0.3cm 0.1cm}, clip]{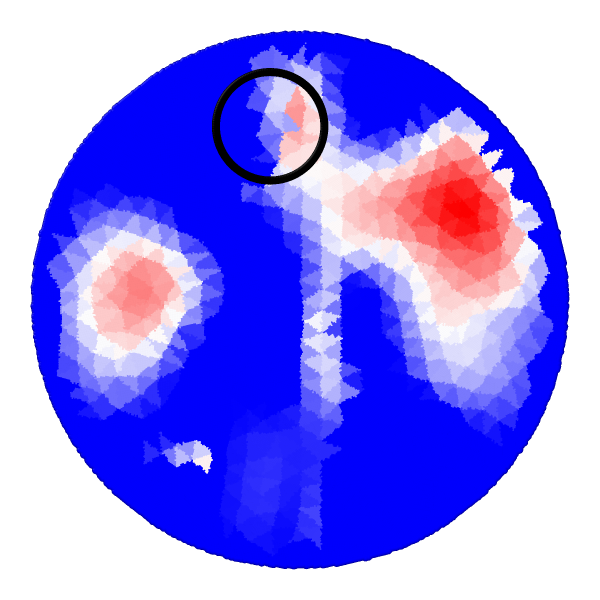}&
\includegraphics[width = \figlen, trim = {0.3cm 0.1cm 0.3cm 0.1cm}, clip]{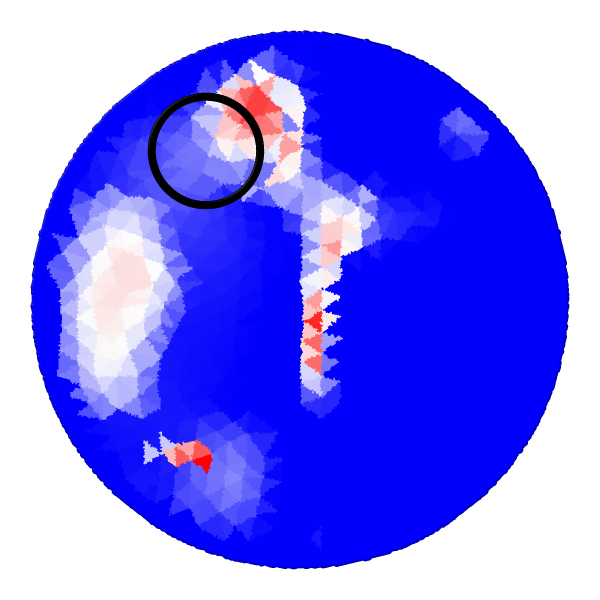}&
\includegraphics[width = \figlen, trim = {0.3cm 0.1cm 0.3cm 0.1cm}, clip]{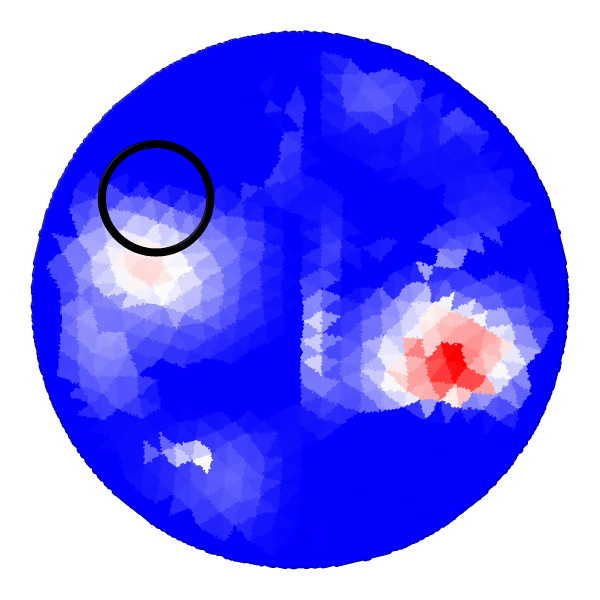}&
\includegraphics[width = \figlen, trim = {0.3cm 0.1cm 0.3cm 0.1cm}, clip]{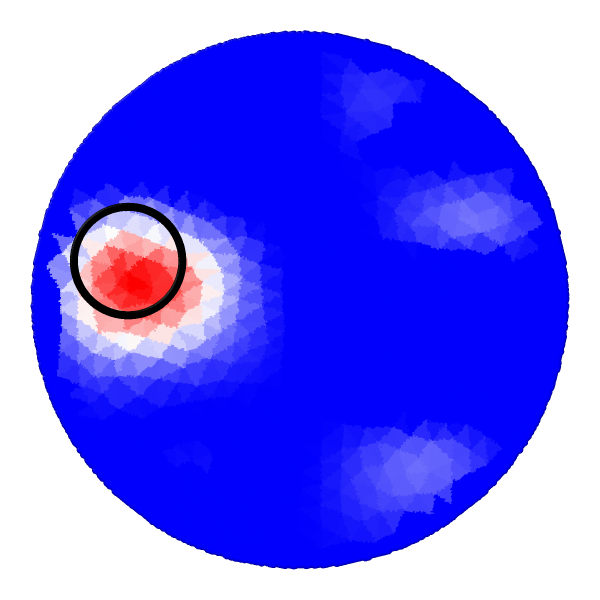}\\
$t=6$ & $t=7$ & $t=8$ & $t=9$ & $t=10$
\end{tabular}
\caption{
Numerical results for Example \ref{exam:2}, which involves mixed-type inhomogeneities.
The top and bottom two rows are for the conductivity and potential inclusions, respectively.\label{fig2}}
\end{figure}

\subsection{Example 3: nonlinear type}\label{exam:3}
This example considers a nonlinear potential problem described by \eqref{eqn13} with $p=3$, where $N(y)u=u|y|y$, and only one pair of Cauchy data is utilized.
By Sobolev embedding theorem \cite{Evans2010}, $H^{1}(\Omega)\subset L^{p}(\Omega)$ for any $p\geq 1$ in $\mathbb{R}^{2}$, ensuring the well-definedness of the operator.
Furthermore, the operator is monotone:
\begin{equation*}
\langle N(y_{1})u-N(y_{2})u, y_{1}-y_{2}\rangle_{(H^{1}(\Omega))'\times H^{1}(\Omega)}=\int_{\Omega}u(y_{1}-y_{2})(|y_{1}|y_{1}-|y_{2}|y_{2})\rmd x>0.
\end{equation*}
Thus the parabolic problem is well-posed.
$\mathcal{P}$ is set to $\mathcal{P}(\eta)=\min\{\max\{\eta, 0.0\}, 40\}$, and $u$ is set to $20$ inside a circular inclusion of radius $0.2$, with the trajectory
\begin{equation*}
\vec{\gamma}_{1}(t)=(0.5\cos(t\pi/ 6+\pi/4),0.7\sin(t\pi/ 6+\pi/4)).
\end{equation*}
The reconstructions by the IDSM using the BFG correction scheme are shown in Fig. \ref{fig3}.

The results show that the method can accurately capture the inclusion trajectory of the nonlinear inhomogeneity with only one pair of Cauchy data.
The low-rank correction dynamically adapts to the nonlinear potential term $N(y)$.
Table \ref{table1} shows that the computational expense for the model using the IDSM is nearly identical to that for the linear case.
Thus, the method can be extended to complex scenarios without sacrificing efficiency.
The nonlinear term $N(y)u=u|y|y$ is nonsmooth in $y$, presenting significant challenges for traditional regularization methods, whereas the IDSM handles the term easily without requiring additional adaptations.

\begin{figure}[hbt!]
\centering
\begin{tabular}{ccccc}
\includegraphics[width = \figlen, trim = {0.3cm 0.1cm 0.3cm 0.1cm}, clip]{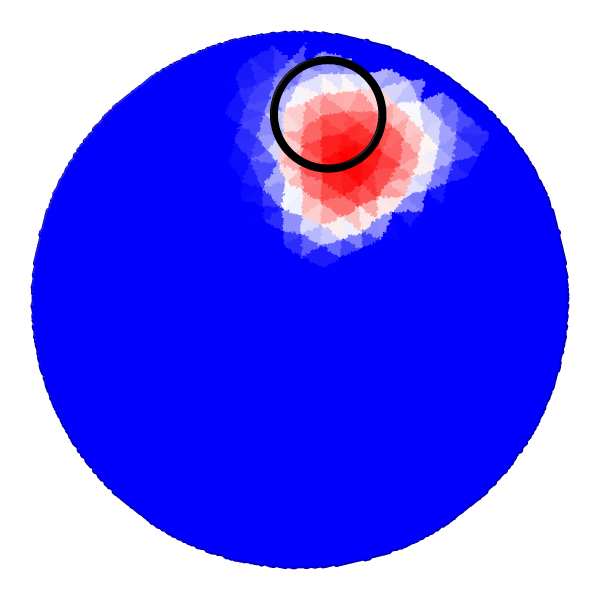}&
\includegraphics[width = \figlen, trim = {0.3cm 0.1cm 0.3cm 0.1cm}, clip]{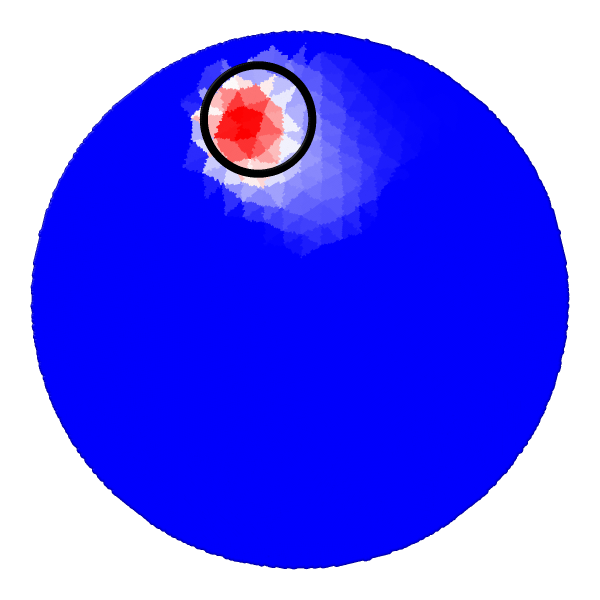}&
\includegraphics[width = \figlen, trim = {0.3cm 0.1cm 0.3cm 0.1cm}, clip]{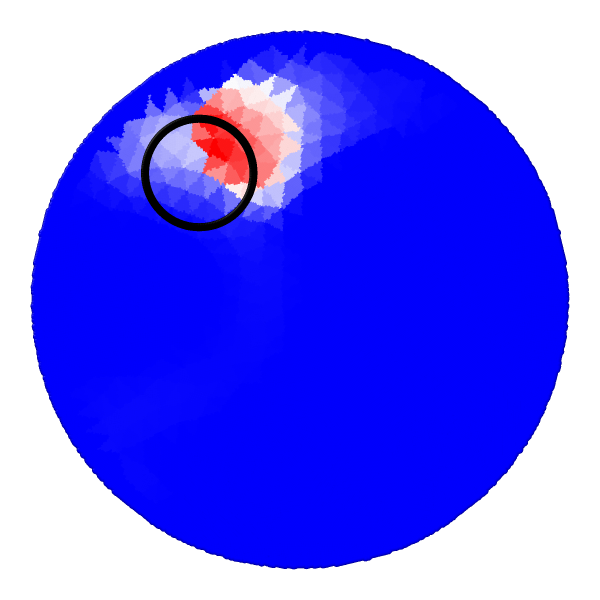}&
\includegraphics[width = \figlen, trim = {0.3cm 0.1cm 0.3cm 0.1cm}, clip]{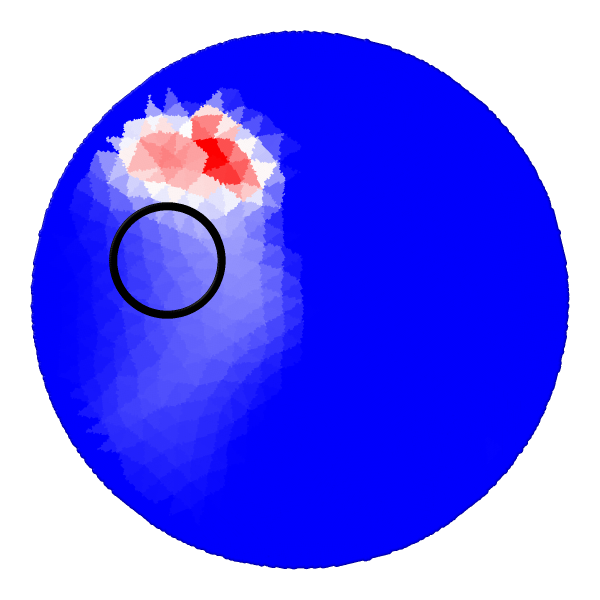}&
\includegraphics[width = \figlen, trim = {0.3cm 0.1cm 0.3cm 0.1cm}, clip]{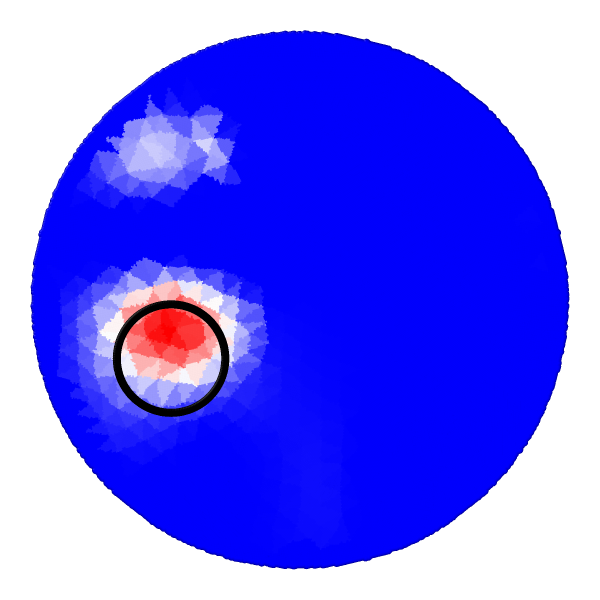}\\
$t=1$ & $t=2$ & $t=3$ & $t=4$ & $t=5$\\
\includegraphics[width = \figlen, trim = {0.3cm 0.1cm 0.3cm 0.1cm}, clip]{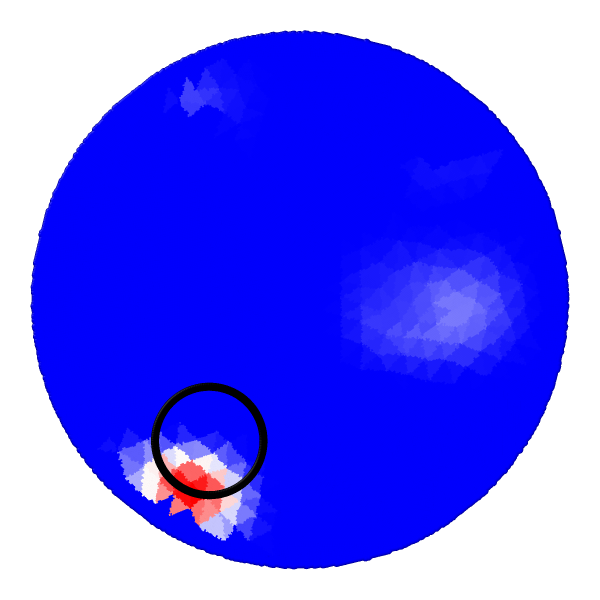}&
\includegraphics[width = \figlen, trim = {0.3cm 0.1cm 0.3cm 0.1cm}, clip]{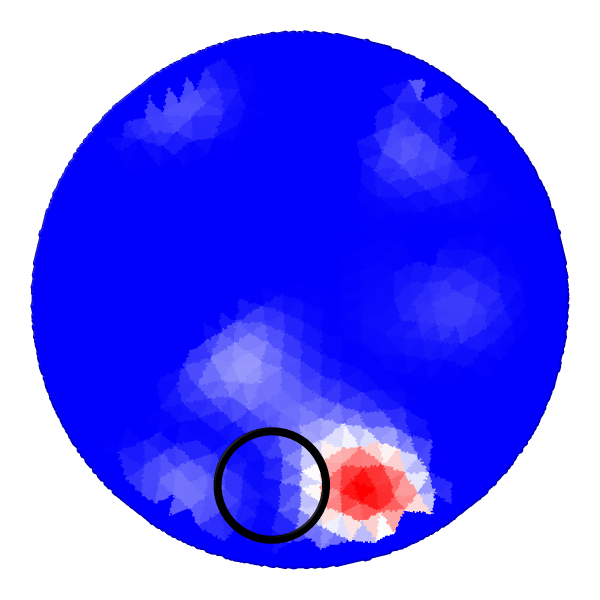}&
\includegraphics[width = \figlen, trim = {0.3cm 0.1cm 0.3cm 0.1cm}, clip]{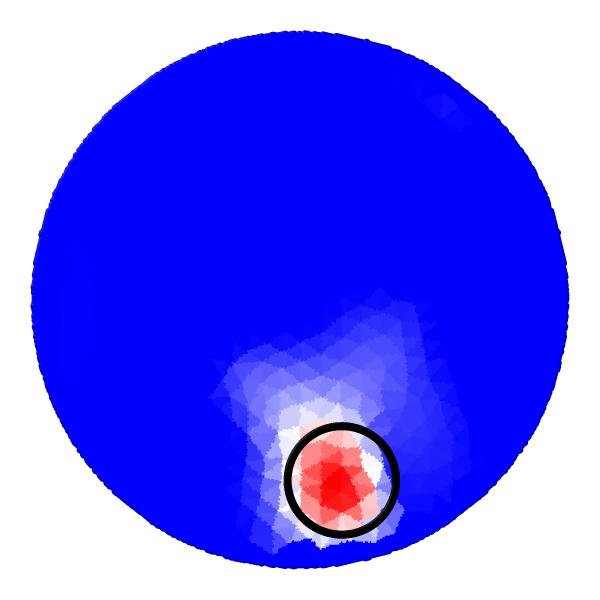}&
\includegraphics[width = \figlen, trim = {0.3cm 0.1cm 0.3cm 0.1cm}, clip]{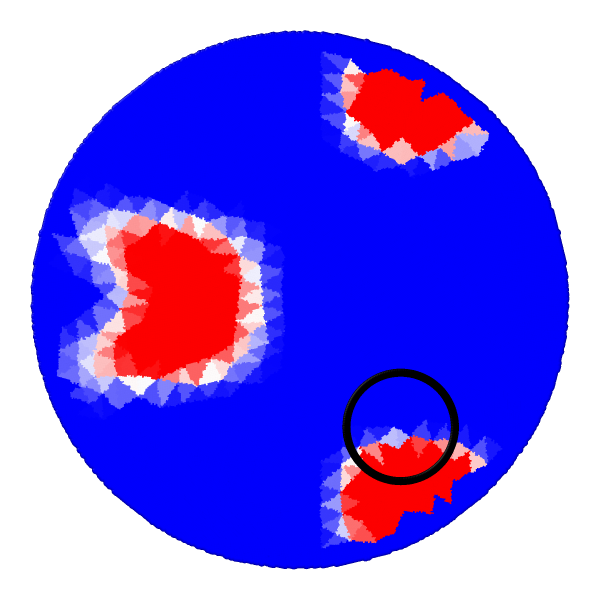}&
\includegraphics[width = \figlen, trim = {0.3cm 0.1cm 0.3cm 0.1cm}, clip]{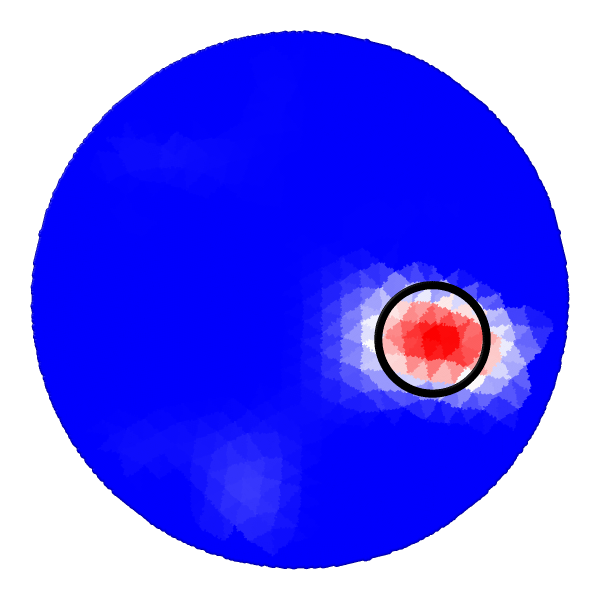}\\
$t=6$ & $t=7$ & $t=8$ & $t=9$ & $t=10$
\end{tabular}
\caption{
Numerical results for Example \ref{exam:3}, which involves nonlinear inhomogeneities.\label{fig3}}
\end{figure}

\subsection{Example 4: Fading potential inhomogeneities}\label{exam:4}
This example focuses on identifying potential inhomogeneities in the parabolic problem
\begin{equation}\label{eqn12}
\partial_{t} y - \Delta y + u_{p} y = f, \quad \text{in } \Omega \times (0, T), 
\end{equation}
with \eqref{eqn:ibc}, where $u_{p}$ represents the spatial-temporal distribution of potential inhomogeneity.
Consider two circular inclusions, each with a radius of $0.2$, whose centers follow the trajectories
\begin{align*}
\vec{\gamma}_{1}(t) &= \left(0.7\cos\left(\tfrac{t\pi}{8} \right), 0.6\sin\left(\tfrac{t\pi}{8} \right)\right), \\
\vec{\gamma}_{2}(t) &= \left(0.5\cos\left(\tfrac{t\pi}{8} + \tfrac{4\pi}{5}\right), 0.6\cos\left(\tfrac{t\pi}{8} + \tfrac{4\pi}{5}\right)\right).
\end{align*}
The potential within the inclusions depends linearly with time: $p_{1}(t) = \max\{15 - 2.5t, 0.0\}$ and $p_{2}(t) = \min\{2.5t, 15\}$, respectively, with the first and second inclusions fading and enhancing, respectively.
The pointwise projection $\mathcal{P}$ is set to $\mathcal{P}(\eta)=\min\{\max\{\eta, 0.0\},30\}$.
The reconstructions by the IDSM using the DFP correction scheme are shown in Fig. \ref{fig4}.

The results indicate that the IDSM accurately tracks the trajectories of both inclusions with only one pair of lateral Cauchy data.
While the coefficients are not resolved, the IDSM effectively captures the fading and enhancing behavior of the two inclusions.
Thus, the IDSM is capable of jointly recovering the spatial dynamics and the coefficients change of the evolving inhomogeneities.

\begin{figure}[hbt!]
\centering
\begin{tabular}{ccccc}
\includegraphics[width = \figlen, trim = {0.3cm 0.1cm 0.3cm 0.1cm}, clip]{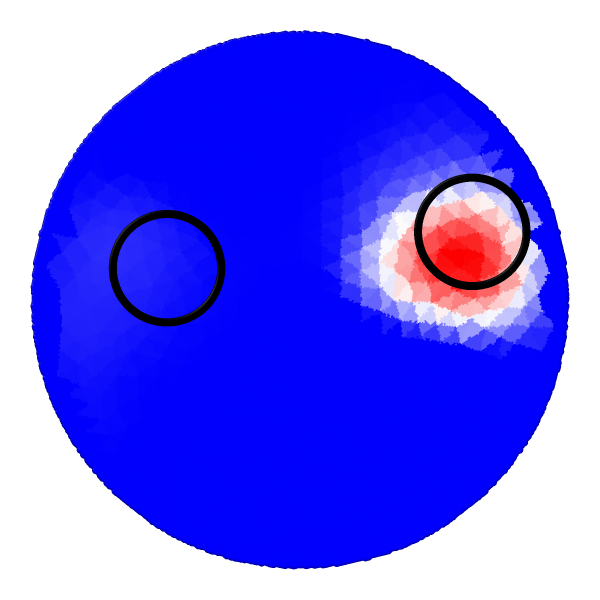}&
\includegraphics[width = \figlen, trim = {0.3cm 0.1cm 0.3cm 0.1cm}, clip]{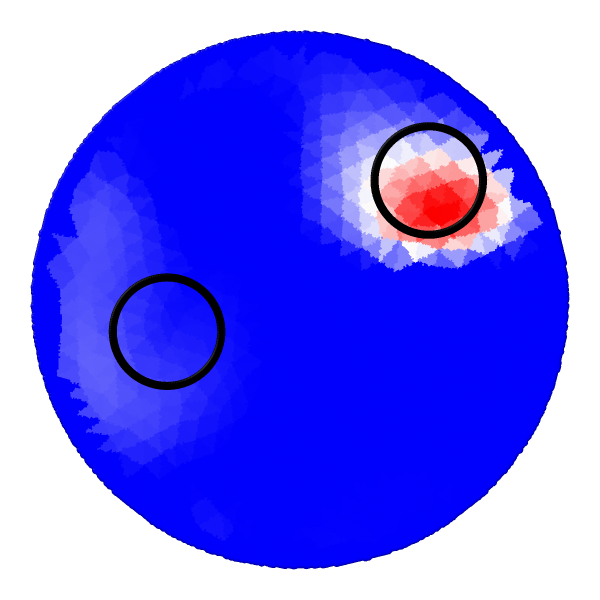}&
\includegraphics[width = \figlen, trim = {0.3cm 0.1cm 0.3cm 0.1cm}, clip]{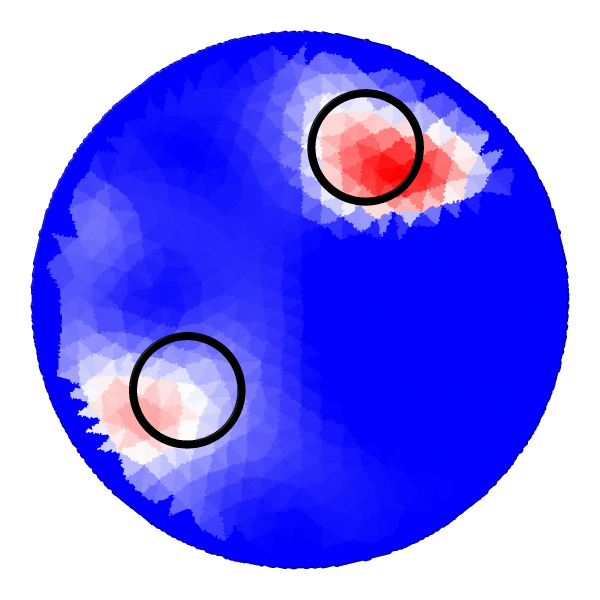}&
\includegraphics[width = \figlen, trim = {0.3cm 0.1cm 0.3cm 0.1cm}, clip]{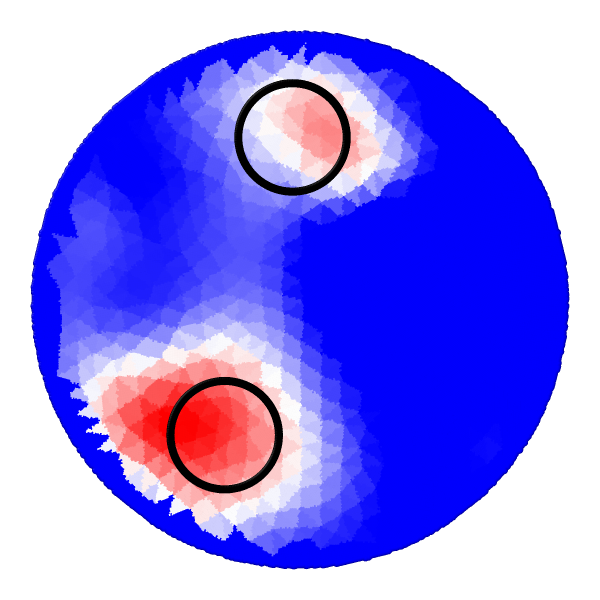}&
\includegraphics[width = \figlen, trim = {0.3cm 0.1cm 0.3cm 0.1cm}, clip]{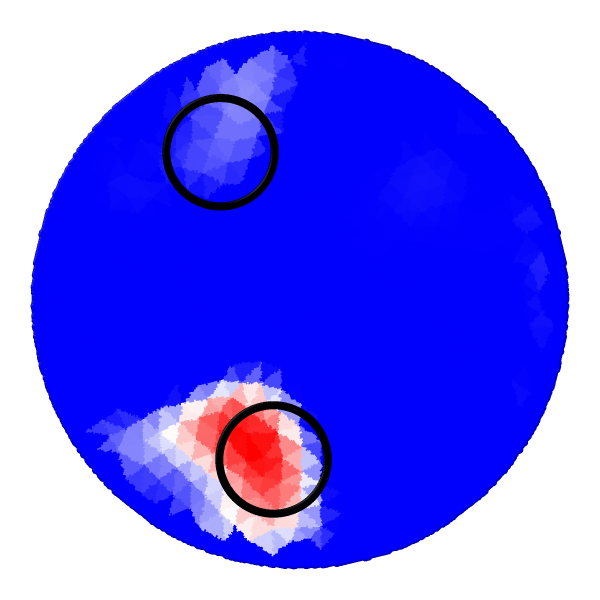}\\
$t=1$ & $t=2$ & $t=3$ & $t=4$ & $t=5$\\
\includegraphics[width = \figlen, trim = {0.3cm 0.1cm 0.3cm 0.1cm}, clip]{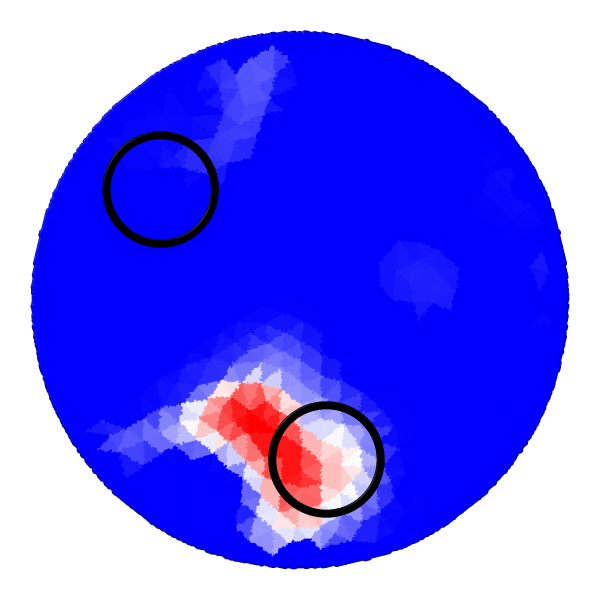}&
\includegraphics[width = \figlen, trim = {0.3cm 0.1cm 0.3cm 0.1cm}, clip]{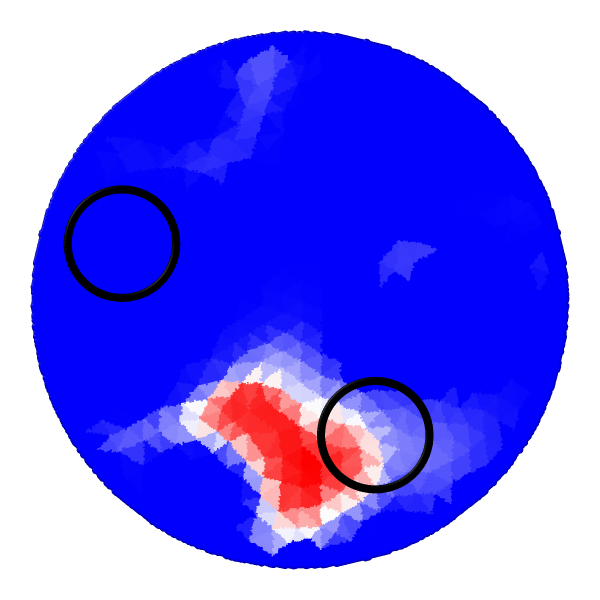}&
\includegraphics[width = \figlen, trim = {0.3cm 0.1cm 0.3cm 0.1cm}, clip]{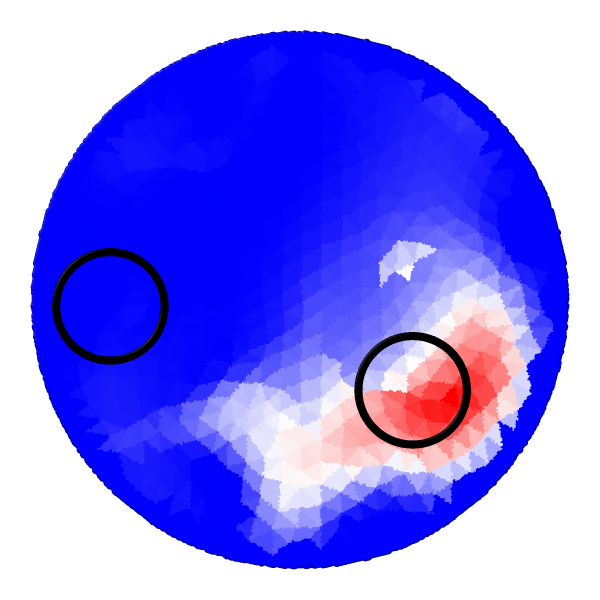}&
\includegraphics[width = \figlen, trim = {0.3cm 0.1cm 0.3cm 0.1cm}, clip]{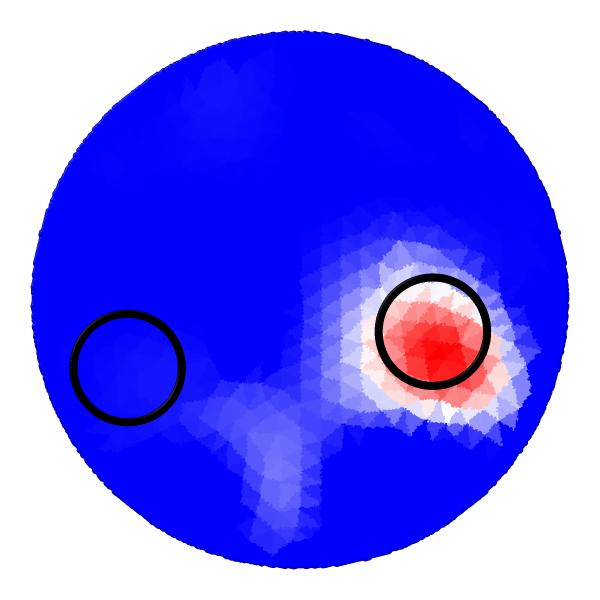}&
\includegraphics[width = \figlen, trim = {0.3cm 0.1cm 0.3cm 0.1cm}, clip]{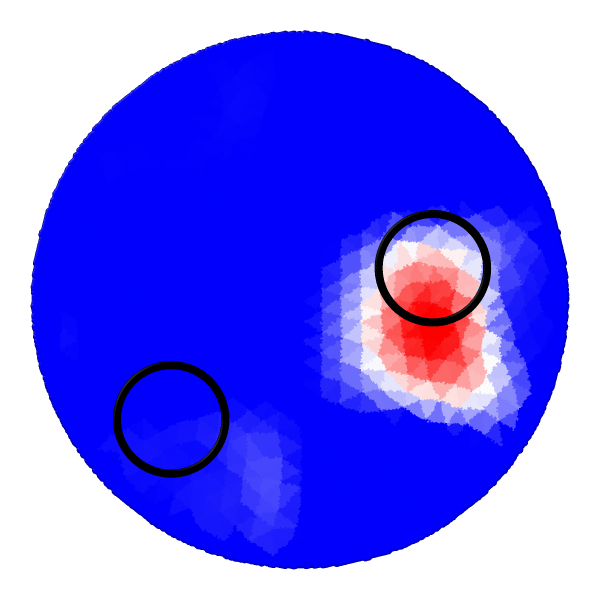}\\
$t=6$ & $t=7$ & $t=8$ & $t=9$ & $t=10$
\end{tabular}
\caption{Numerical results for Example \ref{exam:4}, which involves fading inhomogeneities.\label{fig4}}
\end{figure}

\subsection{Example 5: diminishing inhomogeneities}\label{exam:5}
This example extends the setting of Example \ref{exam:1}, i.e., \eqref{eqn11} with the same parameter setting, but involves a more intricate scenario where one of the inclusions diminishes over time, and utilizes only one pair of Cauchy data.
Both inclusions are circular, with their radii and center trajectories given by
\begin{equation*}
\left\{
\begin{aligned}
r_{1}(t) &= 0.2,&
\vec{\gamma}_{1}(t) &= \left(0.7\cos\left(\tfrac{t\pi}{6} + \tfrac{\pi}{3}\right), 0.6\sin\left(\tfrac{t\pi}{6} + \tfrac{\pi}{3}\right)\right), \\
r_{2}(t) &= \max(0.3 - 0.03t, 0.0),&
\vec{\gamma}_{2}(t) &= \left(0.6\cos\left(\tfrac{t\pi}{6} - \tfrac{2\pi}{3}\right), 0.5\sin\left(\tfrac{t\pi}{6} - \tfrac{2\pi}{3}\right)\right).
\end{aligned}
\right.
\end{equation*}
The first inclusion maintains a radius of $0.2$, while the second gradually shrinks over time and finally vanishes completely.
The reconstructions by the IDSM using the BFG correction scheme are shown in Fig. \ref{fig5}.

The results show that the IDSM captures the dynamics of the diminishing inclusion with limited data.
This example and Examples \ref{exam:3} and \ref{exam:4} indicate that the IDSM requires only one single pair of data if only one type of inhomogeneity is involved, even the dynamic behavior of the inclusion is complex or the inhomogeneity is nonlinear.
The algorithm is computationally efficient (about 4-5 direct PDE solves), cf. Table \ref{table1}.

\begin{figure}[hbt!]
\centering
\begin{tabular}{ccccc}
\includegraphics[width = \figlen, trim = {0.3cm 0.1cm 0.3cm 0.1cm}, clip]{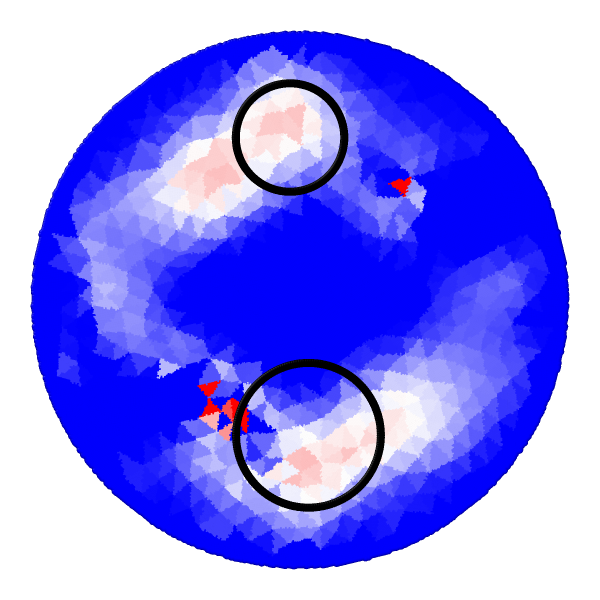}&
\includegraphics[width = \figlen, trim = {0.3cm 0.1cm 0.3cm 0.1cm}, clip]{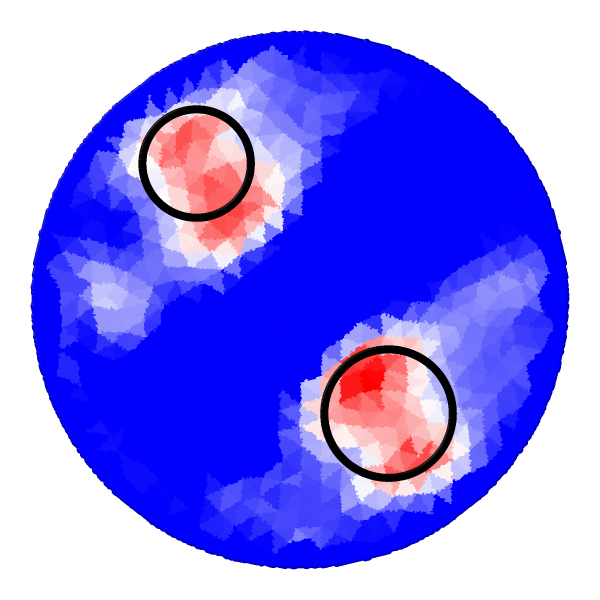}&
\includegraphics[width = \figlen, trim = {0.3cm 0.1cm 0.3cm 0.1cm}, clip]{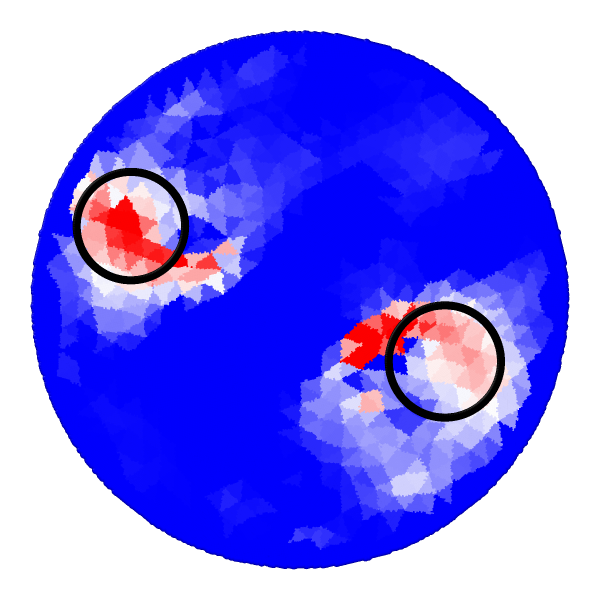}&
\includegraphics[width = \figlen, trim = {0.3cm 0.1cm 0.3cm 0.1cm}, clip]{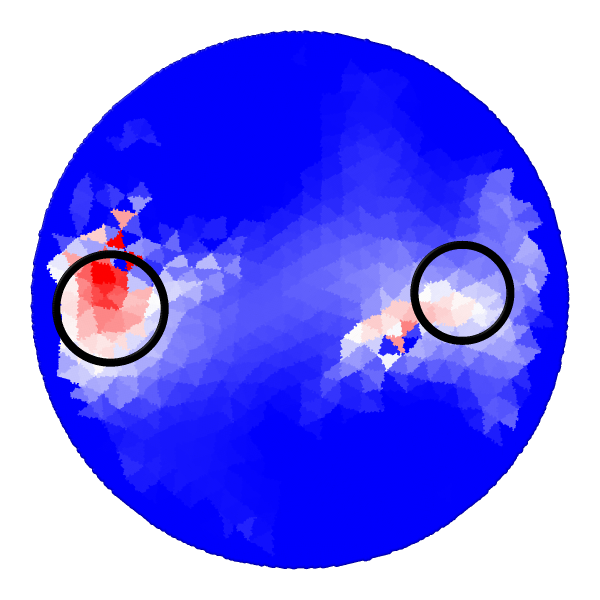}&
\includegraphics[width = \figlen, trim = {0.3cm 0.1cm 0.3cm 0.1cm}, clip]{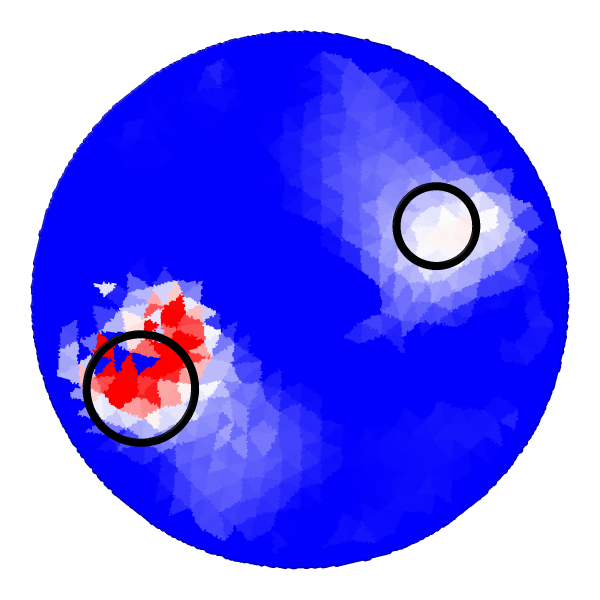}\\
$t=1$ & $t=2$ & $t=3$ & $t=4$ & $t=5$\\
\includegraphics[width = \figlen, trim = {0.3cm 0.1cm 0.3cm 0.1cm}, clip]{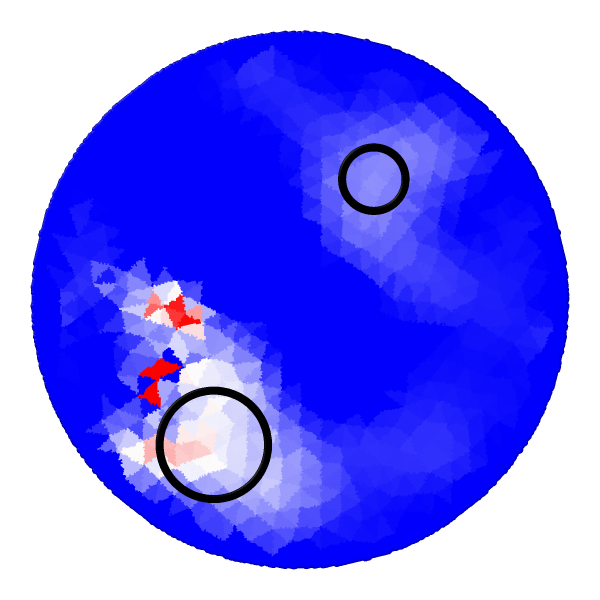}&
\includegraphics[width = \figlen, trim = {0.3cm 0.1cm 0.3cm 0.1cm}, clip]{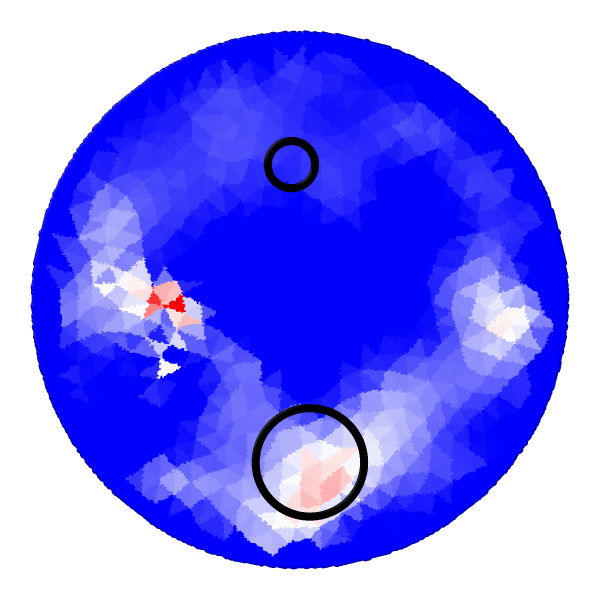}&
\includegraphics[width = \figlen, trim = {0.3cm 0.1cm 0.3cm 0.1cm}, clip]{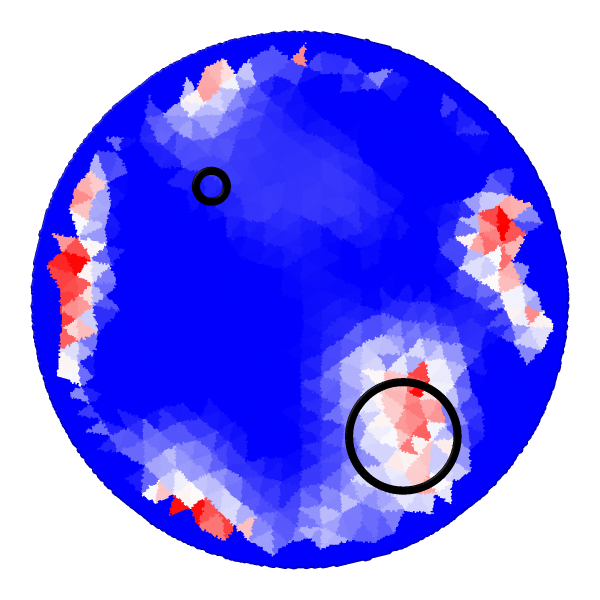}&
\includegraphics[width = \figlen, trim = {0.3cm 0.1cm 0.3cm 0.1cm}, clip]{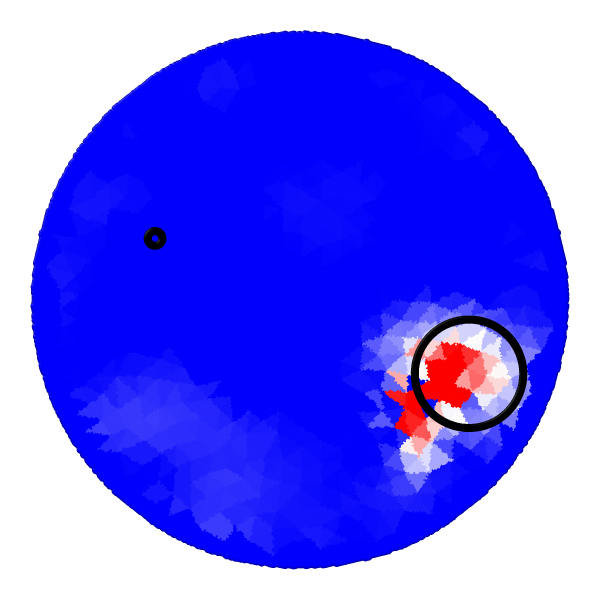}&
\includegraphics[width = \figlen, trim = {0.3cm 0.1cm 0.3cm 0.1cm}, clip]{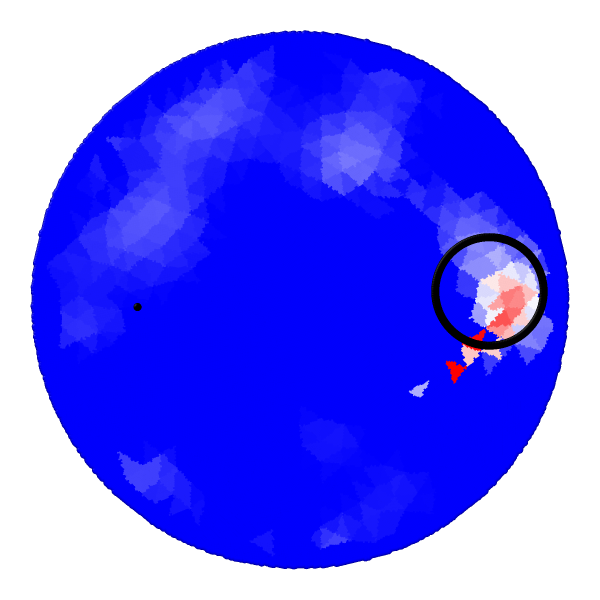}\\
$t=6$ & $t=7$ & $t=8$ & $t=9$ & $t=10$
\end{tabular}
\caption{
Numerical results for Example \ref{exam:5}, which involves diminishing inhomogeneities.\label{fig5}
}
\end{figure}

\section{Conclusion}\label{sec:CON}
In this study we have developed an iterative direct sampling method for reconstruction moving inhomogeneities in parabolic inverse problems.
It incorporates dynamic low-rank adaptation and strategies for improving the robustness of the formulation and can reliably image moving inclusions from noisy data, using only one pair of lateral Cauchy data.
The abstract framework ensures that it is applicable to diverse parabolic inverse problems, including nonlinear and mixed-type scenarios.
Numerical experiments validate the performance of the method across challenging scenarios, e.g., moving, fading, or shape-evolving inclusions, and resolving complex interactions like merging and splitting.
The results confirm the robustness and versatility of the IDSM.
The efficiency and stability are impressive: it requires only 4-5 total PDE solves per Cauchy data pair under the noise level $\varepsilon = 5\%$.
Overall, it is a powerful tool for solving inverse problems in parabolic systems involving evolving inclusions.
Future work may explore further enhancements, e.g., integrating machine learning techniques or extending the method to other time-dependent PDE models.

\bibliographystyle{siam}
\bibliography{ref}
\end{document}